\newtheorem{theorem}{Theorem}[section]
\newtheorem{lemma}[theorem]{Lemma}
\newtheorem{proposition}[theorem]{Proposition}
\newtheorem{remark}{Remark}[section]
\newtheorem{definition}{Definition}[section]
\def\R{{\rm I}\!{\rm R}}
\def\L2{L^2(\Omega)}
\def\h01{H^1_0(\Omega)}
\def\h10{H^1_0(\Omega)}
\def\a{\alpha}
\def\p{{\bf p}}
\def\q{{\bf q}}
\def\n{{\mathbf n}}
\def\u{{\bf u}}
\def\w{{\bf w}}
\def\x{{\bf x}}
\def\y{{\bf y}}
\def\vv{{\bf v}}
\def\d{\mathrm{div\,}}
\def\RT{{\mathcal RT}}
\def\l{\ell}
\def\W1p{W^{1,p}}
\def\Lp{L^p}
\def\hT{\widehat T}
\def\tQ{\widetilde Q}
\def\btQ{\widetilde{\bf Q}}
\def\tu{\widetilde{\bf u}}
\def\tT{\widetilde T}
\begin{document}
%\date{}
\title{Error estimates for  Raviart-Thomas interpolation of any order
on anisotropic tetrahedra}

\footnotetext[1]{Departamento de Matem\'atica, Facultad de
Ciencias Exactas y Naturales, Universidad de Buenos Aires, 1428
Buenos Aires, Argentina.}

\footnotetext[2]{Institut f\"ur Mathematik und Bauinformatik,
Universit\"at der Bundeswehr M\"unchen, Neubiberg, Germany.}

\footnotetext[3]{Instituto de Ciencias, Universidad Nacional de
General Sarmiento, J.M. Gutierrez 1150, Los Polvorines, B1613GSX
Provincia de Buenos Aires, Argentina.}

\footnotetext[4]{CONICET, Argentina.}

\author{G. Acosta$^1$ \and  Th. Apel$^2$   \and R. G. Dur\'an$^{1,4}$
\and A. L. Lombardi$^{1,3,4}$}

\date{}
\maketitle

\noindent{\small{\bf Abstract.} We prove optimal order error estimates
for the Raviart-Thomas interpolation of arbitrary order under
the maximum angle condition for triangles and under two generalizations of
this condition, namely, the so-called
three dimensional maximum angle condition and the regular vertex property,
for tetrahedra.

Our techniques are different from those used in previous papers
on the subject and the results obtained are more general in several
aspects.
First, intermediate regularity is allowed, that is, for the Raviart-Thomas
interpolation of degree $k\ge 0$, we prove error estimates
of order $j+1$ when the vector field being approximated has components in
$W^{j+1,p}$, for triangles or tetrahedra,
where $0\le j \le k$ and $1\le p \le\infty$. These results are new
even in the two dimensional case. Indeed, the estimate was known only
in the case $j=k$. On the other hand, in the three dimensional case,
results under the maximum angle condition were known only for $k=0$.

\bigskip

\noindent{\small{\bf Key words.}} Mixed finite elements,
Raviart-Thomas, anisotropic finite elements.
\bigskip

\noindent{\small{\bf AMS subject classifications.}} 65N30.

\section{Introduction} The Raviart-Thomas finite element spaces  were
introduced in \cite{RT,T}, and extended to the three-dimensional case by
N\'ed\'elec \cite{N}, to approximate second order elliptic problems in mixed form.
After publication of that paper there has been an increasing
interest in the analysis of these spaces and on the approximation
properties of the associated Raviart-Thomas interpolation
operator. This interest has been motivated by the fact that, apart
from the original motivation, these spaces (or rotated versions of
them in the two dimensional case) arise in several interesting applications. For example in
mixed methods for plates (see \cite{BF,BFS,DL}) and
in the numerical approximation of fluid-structure interaction
problems  \cite{BDMRS}. Also, it is well known that mixed methods
are related to non-conforming methods \cite{AB,M}, therefore, the
Raviart-Thomas interpolation operator can be useful in some cases
to analyze this kind of methods (see for example \cite{AD1} where
a non-conforming method for the Stokes problem is analyzed).

The original error analysis developed in \cite{N,RT,T} is based on the
so-called regularity assumption on the elements and therefore, the constants
arising in the error estimates in those works depend on the ratio between
outer and inner diameter of the elements. In this way narrow or anisotropic elements,
which are very important in many applications, are excluded.

For the standard Lagrange interpolation it is known, since the pioneering works
\cite{BA,J,Synge} and many generalizations of them (see \cite{Ap} and its references),
that the regularity assumption can be relaxed to a \emph{maximum angle condition} in many cases.

Error estimates for the Raviart-Thomas interpolation under conditions weaker
than the regularity have been proved in several papers. In \cite{AD1} the lowest
order case $k=0$ was considered and optimal order error estimates were proved
under the maximum angle condition for triangles and a suitable generalization
of it for tetrahedra, called \emph{regular vertex property}. This result was extended in \cite{FNP} to prismatic elements and functions from weighted Sobolev spaces.

It is not straightforward to extend the arguments given in \cite{AD1}
to higher order Raviart-Thomas approximations. In \cite{D2} it was proved
that the maximum angle condition is also sufficient to obtain optimal
error estimates for the case $k=1$ and $n=2$ and in \cite{DL3} that result was
generalized to any $k\ge 0$. Also in \cite{DL3}, error estimates for any $k\ge 0$
and $n=3$ were proved assuming the regular vertex property.

The error estimates obtained in \cite{DL3}
require ``maximum regularity''. To be precise let $\Pi_k\u$ be
the Raviart-Thomas interpolation of degree $k$ of $\u$ on a triangle $T$ then, it was proved
in \cite{DL3} that
\begin{equation}
\label{DL3}
\|\u-\Pi_k\u\|_{L^2(T)}
\le\frac{C}{\sin\a}\,\,h_T^{k+1} |\u|_{H^{k+1}(T)}
\end{equation}
where we have used the standard notation for Sobolev seminorms,
$\a$ and $h_T$ are the maximum angle and the diameter of $T$ respectively,
and the constant $C$ is independent of $T$. However, an estimate like (\ref{DL3}) but
with $k$ replaced by $j<k$, only on the right hand side, cannot be proved by the arguments given in \cite{DL3}
and therefore a different approach is needed. Let us remark that this kind of estimates
is important in many situations. In particular, the lowest order estimate
$$
\|\u-\Pi_k\u\|_{L^2(T)}
\le\frac{C}{\sin\a}\,\,h_T |\u|_{H^1(T)}
$$
is fundamental in the error analysis for the scalar variable in mixed
approximations of second order elliptic problems. In particular
the inf-sup condition can be obtained from this estimate (see for example \cite{DR,D3}).

The maximum angle condition was originally introduced for triangles.
For the three dimensional case two different generalizations have been given.
One is the K\v r\'\i\v zek maximum angle condition
introduced in \cite{K}: the angles between faces and the angles in the faces are bounded
away from $\pi$.
Another possible extension is the regular vertex property
introduced in \cite{AD1}: a family of tetrahedral elements satisfies this
condition if for each element there is at least one vertex such that
the unit vectors in the direction of the edges sharing that vertex
are ``uniformly'' linearly independent,
in the sense that the volume determined by them is uniformly bounded
away from zero.

These two conditions are equivalent in two dimensions but not
in three. Indeed, the K\v r\'\i\v zek maximum angle condition
allows for more general elements. This can be seen in the following way:
consider the two families of elements
given in Figure \ref{prismas}, where $h_1$, $h_2$ and $h_3$ are arbitrary positive numbers.
Both families satisfy the K\v r\'\i\v zek condition
but the second family
does not satisfy the regular vertex property.

\begin{figure}[h]
\begin{center}
\label{prismas}
\begin{tabular}{cc}
\epsfxsize 5  cm \epsfbox{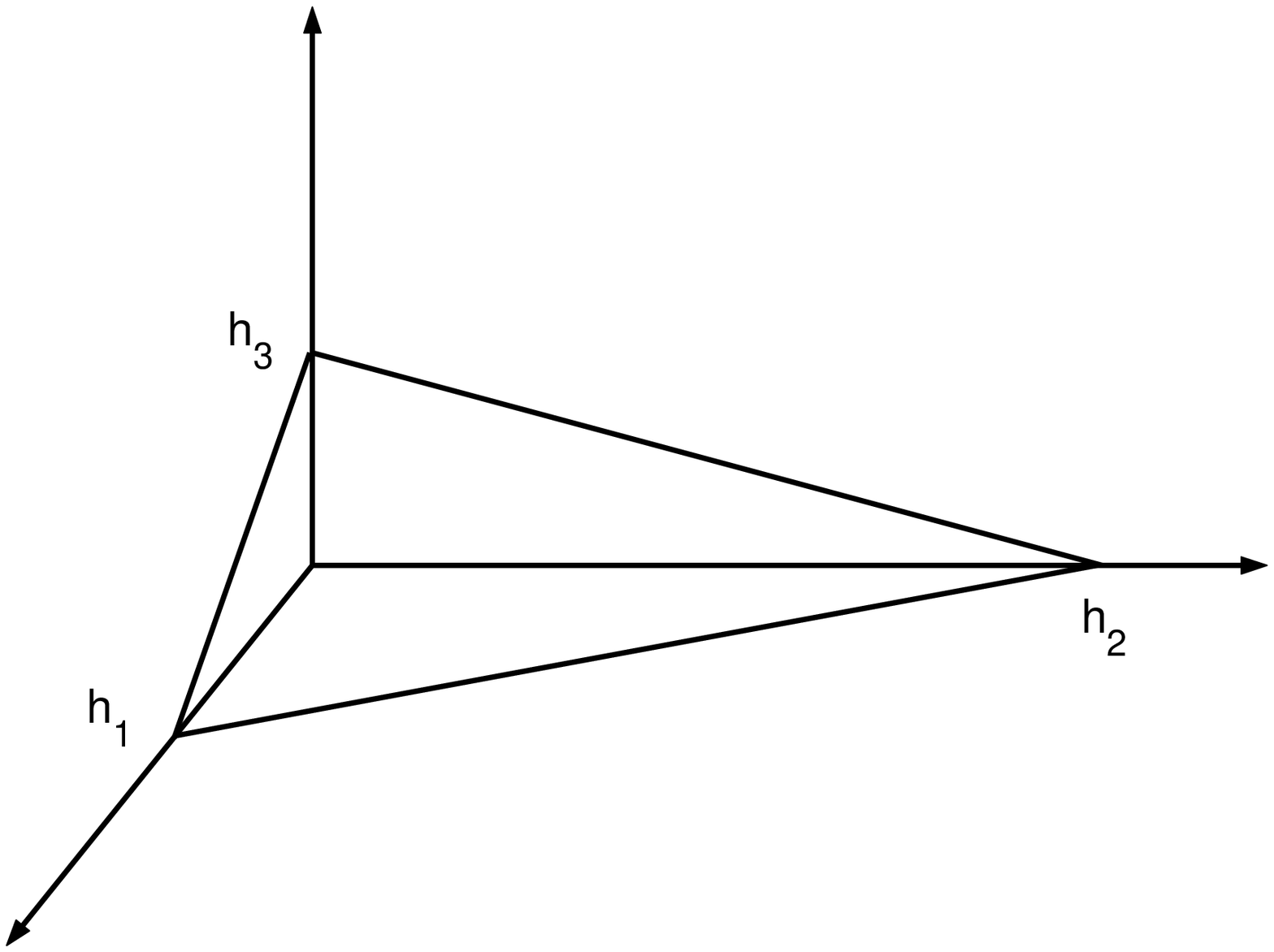} & \epsfxsize 5 cm
\epsfbox{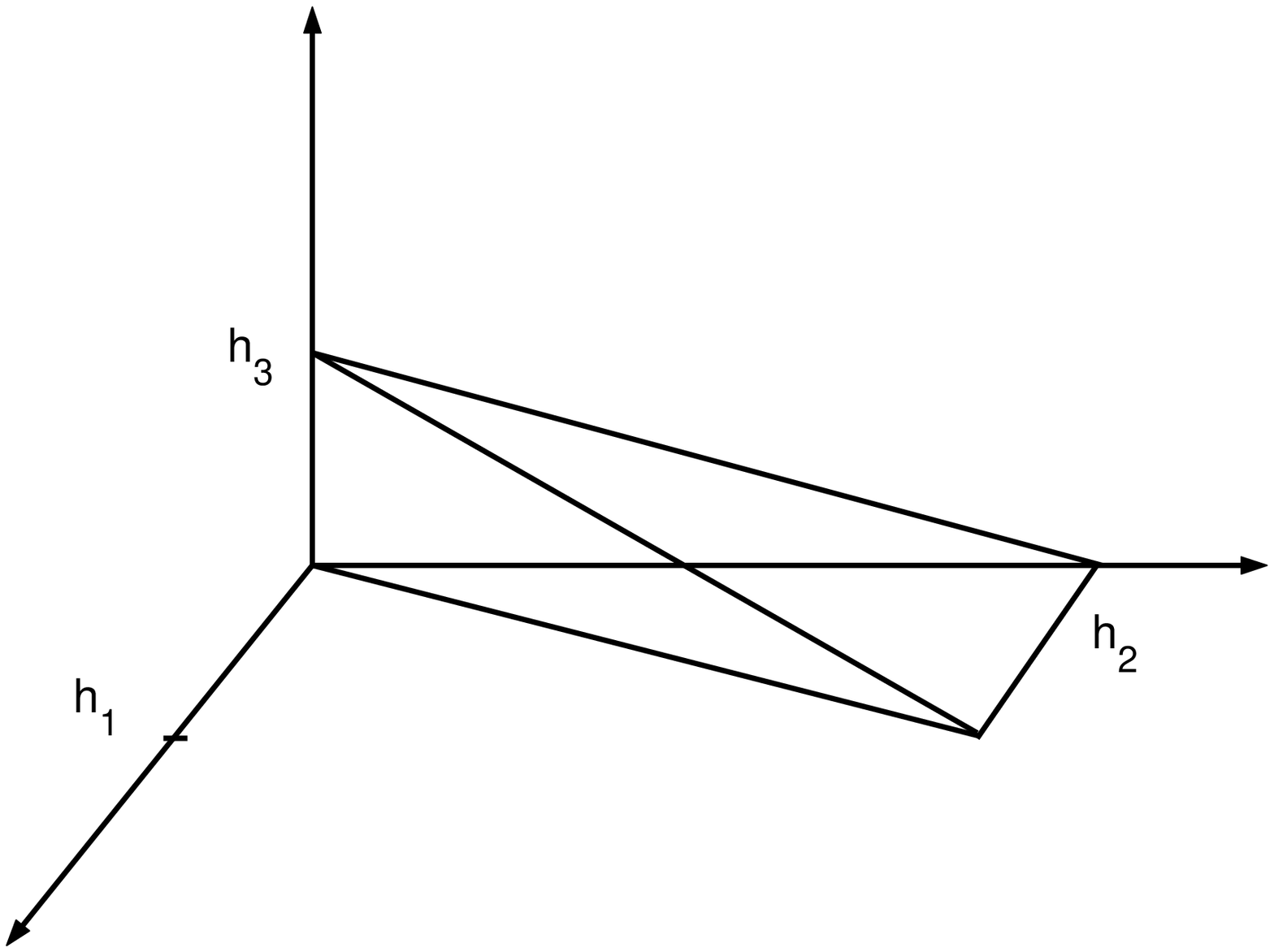}
 \\ (a) & (b)
\end{tabular}
\\ Figure 1
\end{center}
\end{figure}

Essentially these two families of elements give all possibilities.
Indeed, the family of all elements satisfying the K\v r\'\i\v zek
condition with a constant $\bar\psi<\pi$ (i.e., angles between
faces and angles in the faces less than or equal to $\bar\psi$)
can be obtained transforming both families in the figure by
``good'' affine transformations (see Theorem \ref{mac} for the
precise meaning of this). This result was obtained in \cite{AD1}
in the proof of Lemma 5.9. For the sake of clarity we will include
this result as a theorem. On the other hand, the family of all
elements satisfying the regular vertex property with a given
constant (see Section 3 for the formal definition of this) is
obtained by transforming in the same way only the first family in
the figure.
Therefore, to obtain general results under the K\v r\'\i\v zek maximum angle condition
({\sl resp. regular vertex property})
it is enough to prove error estimates for both families ({\sl resp. the first family})
in Figure \ref{prismas} with constants independent of the relations
between $h_1$, $h_2$ and $h_3$.

The error estimates in \cite{DL3} for the general $\RT_k$ were obtained
assuming the regular vertex property and the arguments given in
that paper cannot be extended to treat the more general case of
elements satisfying the K\v r\'\i\v zek condition. On the other hand,
as we have mentioned above, the arguments in \cite{DL3} can not
be applied to obtain error estimates for functions in $H^{j+1}(T)^n$
with $j<k$. For these reasons we need to introduce here a different approach.

In this paper we complete the error analysis for the Raviart-Thomas interpolation
of arbitrary order $k\ge 0$.
We develop the analysis in the general case of $L^p$  based norms, generalizing
also in this aspect the results of previous papers.
Our arguments are different to those used in previous papers.
The main point is to prove sharp estimates in reference elements.

Let us explain the idea in the two dimensional case. Consider
the reference triangle $\hT$ which has vertices at $(0,0)$, $(1,0)$ and
$(0,1)$. A stability estimate on $\hT$ can be used to obtain
the stability in a general triangle by using the Piola transform.
Afterwards, error estimates can be proved combining stability
with polynomial approximation results.

The original proof given in \cite{RT} uses that
$$
\|\Pi_k\u\|_{L^2(\hT)} \le C \|\u\|_{H^1(\hT)}.
$$
In this way, the constant arising in the estimate for a general
element depends on the minimum angle and so the regularity
assumption is needed. The reason of that dependence is that
the complete $H^1$-norm appears on the right hand side.
Therefore, to improve this result one may try to
obtain sharper estimates on $\hT$ for each component of $\Pi_k\u$.
Denote with $u_j$ and $\Pi_{k,j}\u$, $j=1,2$, the components of
$\u$ and its Raviart-Thomas interpolation respectively and
consider for example $j=1$. Ideally, we would like to have the estimate
$$
\|\Pi_{k,1}\u\|_{L^2(\hT)} \le C \|u_1\|_{H^1(\hT)}.
$$
However, an easy computation shows that if, for example,
$\u=(0,x_2^2)$ then, $\Pi_k\u=\frac13(x_1,x_2)$ and therefore
the above estimate is not true. In other words, even for a rectangular
triangle $\hT$, $\Pi_{k,1}\u$ depends on both components of $\u$.
Now, the question is: which are the essential degrees of freedom
defining $\Pi_{k,1}\u$?

To answer this question one can try to ``kill'' degrees of freedom
by modifying $\u$ without changing $\Pi_{k,1}\u$. A key observation
is that if ${\bf r}=(0,g(x_1))$ then $\Pi_{k,1}{\bf r}=0$ (we will give the proof of this
result for appropriate reference elements in the three dimensional case).
Therefore, if $\vv=(u_1(x_1,x_2),u_2(x_1,x_2)-u_2(x_1,0))$ then,
$\Pi_{k,1}\vv=\Pi_{k,1}\u$. But the normal component of $\vv$ on the
edge $\ell_2$ contained in the line $\{x_2=0\}$, i.e. $v_2$, vanishes,
and so do all the degrees of freedom defining $\Pi_k$ associated with that edge. Moreover,
if we now modify the second component defining
$\w=(u_1(x_1,x_2),u_2(x_1,x_2)-u_2(x_1,0)-x_2\alpha)$, for some
$\alpha\in {\mathcal P}_{k-1}$, we still have that $w_2$ vanishes on $\ell_2$
and that $\Pi_{k,1}\w=\Pi_{k,1}\u$ (because we are modifying $\vv$ by adding
a vector field belonging to the Raviart-Thomas space of order $k$).
But, as we will see, it is possible to choose $\alpha$ in such a way
that the degrees of freedom corresponding to integrals over $\hT$ also vanish.
Of course, it will be necessary to estimate some norm of $\alpha$.
We will give the details of the proofs in the three dimensional
case. It is easy to see that the same arguments can be used to complete
the arguments explained above for the two dimensional case.

The new contributions of this paper can be summarized as follows:

\begin{itemize}

\item We prove error estimates under the maximum angle condition
with order $j+1$ if the approximated function is in $W^{j+1,p}(T)^n$,
$n=2,3$, where $0\le j \le k$ and $1\le p \le\infty$.

\item Under the regular vertex property we obtain estimates
of anisotropic type also for general $k\ge 0$ and $1\le p \le\infty$.
We also show that this kind of estimates is not valid
under the maximum angle condition.

\end{itemize}

Let us finally mention that the interpolation error estimates of
anisotropic type are necessary when one wishes to exploit the
independent element sizes $h_1$, $h_2$ and $h_3$ to treat edge
singularities in elliptic problems or layers in singularly perturbed
problems. The dilemma is that such estimates hold, as we show,
only for tetrahedra with the regular vertex property but it seems to
be impossible to fill space by using this type of elements only. An
anisotropic triangular prism (pentahedron) can, for example, be
subdivided into three tetrahedra, from which only two satisfy the
regular vertex property while the third is of the type of the element
at right hand side of Figure 1. The only known way out so far is
discussed in \cite{FNP}. These authors use pentahedral meshes or
tetrahedral meshes which are obtained by a suitable subdivision of a
pentahedral mesh. Pentahedra based on a regular triangular face
satisfy the regular vertex property by construction. For the
approximation on tetrahedral elements they use a composition of two
interpolation operators in order to avoid the above mentioned
insufficiency with the tetrahedra which do not satisfy the regular
vertex property. This approach is restricted to prismatic domains so far.

The rest of the paper is as follows. In Section 2 we introduce notation
and give some preliminary results on the conditions on tetrahedra that
we will work with.
Then, we prove stability in $\Lp(T)^3$ for the
Raviart-Thomas interpolation of arbitrary degree for functions in
$\W1p(T)^3$. These stability results are proved in Sections 3
for elements satisfying the regular vertex property, and in Section 4
for elements satisfying the maximum angle condition.
The estimates obtained under both hypotheses are essentially different but
the results are sharp. Indeed,
in Section 5 we show that
anisotropic type stability estimates can not
be obtained for the larger class of elements satisfying the
maximum angle condition.
Finally, in Section 6, we derive
the error estimates from the stability results
and standard approximation arguments.

\setcounter{equation}{0}
\section{Notation and Preliminary Results}
In this section we recall some known results involving geometric
properties of certain degenerate tetrahedra. Most of these results
were proved in \cite{AD1} and \cite{Ap}.

Given a general tetrahedron $T\subset \R^3$, ${\bf p}_0$ will
denote an arbitrary vertex
and, for $1\le i \le 3$, $\l_i$, with $\|\l_i\|=1$, will be the directions
of the edges sharing ${\bf p}_0$ and $h_i$ the lengths of those edges.
In other words, $T$ is the convex hull of
$\{{\bf p}_0\}\cup \{{\bf p}_0+h_i\l_i\}_{1\le i\le 3}$.

We will use the standard notation for Sobolev spaces $W^{k,p}(\Omega)$ of
functions with all their derivatives up to the order $k$ belonging
to $L^p(\Omega)$, denoting by $\|\cdot\|_{W^{k,p}(\Omega)}$ the associated norm.
The same notation will be used for the norm of vector fields
${\bf u}\in W^{k,p}(\Omega)^3$. As it is usual, we use boldface
fonts for vector fields.

With $\mathcal P_k(T)$ we denote the set of polynomials of degree
less than or equal $k$ defined over $T\subset \R^3$. The
Raviart-Thomas space of order $k$ is defined as
$$
\mathcal{RT}_k=\mathcal P_k(T)^3 + (x_1,x_2,x_3)\mathcal P_k(T),
$$
and for $\u\in W^{1,p}(T)^3$ the Raviart-Thomas
interpolation of order $k$ is defined as $\Pi_k\u \in\mathcal{RT}_k$
such that
\begin{eqnarray}
\label{RTcaras} \int_F\Pi_k\u\cdot\n p_k & = & \int_F\u\cdot \n
p_k\qquad\forall
p_k\in\mathcal P_k(F),\ \ F\mbox{ face of }T,\\
\label{RTadentro} \int_{T}\Pi_k\u\cdot\p_{k-1} & = &
\int_T\u\cdot\p_{k-1} \qquad \forall\p_{k-1}\in\mathcal
P_{k-1}(T)^3.
\end{eqnarray}

In the rest of the paper the letter $C$ will denote a generic constant that may
change from line to line.

Now we introduce the different conditions on the elements that we will use.
The first one, called {\it ``regular vertex property"} was introduced in \cite{AD1}.

\begin{definition}
A tetrahedron $T$ satisfies the ``regular vertex property" with a
constant $\overline c>0$ (or shortly, \textsl{RVP}$(\bar c)$) if $T$ has a
vertex ${\bf p}_0$, such that if $M$ is the matrix made up with
$\l_i$, $1\le i\le 3$, as columns then $|\det M|>\overline c$.
\end{definition}

One can easily check that a regular family of tetrahedra (with the usual
definition of regularity given for example in \cite{C1}) verifies
the regular vertex property. On the other hand, simple examples like that
at the left hand side of
Figure \ref{prismas} show that arbitrarily narrow elements are allowed in
the class given by \textsl{RVP}$(\bar c)$ for a fixed~$\bar c$.

Despite the
presence of anisotropic elements the regular vertex property
arises as a natural geometric condition if one looks for
Raviart-Thomas interpolation error bounds. Indeed, looking at the vertex
placed at ${\bf p}_0$, one can see that the family of elements satisfying
\textsl{RVP}$(\bar c)$, have three normal vectors (those normals to the
faces sharing ${\bf p}_0$) uniformly linearly independent (see
\cite{AD1} for more details). A reasonable condition, since
the moments of the normal components of vectors fields are used as
degrees of freedom in the Raviart-Thomas interpolation.
Strikingly, as was shown in \cite{AD1}, the uniform independence
of the normal components can be somehow relaxed and error
estimates valid uniformly for a wider class of elements can be still
obtained for $\Pi_0$ (and for $\Pi_k$ as we
will show). More precisely, we will prove error estimates under
the maximum angle condition defined below, which was introduced
by Krizek in \cite{K} and is weaker than the \textsl{RVP}.

\begin{definition}
A tetrahedron $T$ satisfies the ``maximum angle condition" with a
constant $\bar\psi<\pi$ (or shortly \textsl{MAC}$(\bar\psi)$) if the
maximum angle between faces and the maximum angle inside the faces
are less than or equal to $\bar\psi$.
\end{definition}

Let us mention that the estimates obtained under \textsl{RVP} are stronger
than those valid under \textsl{MAC}. Indeed, in the first case the estimates
are of anisotropic type
(roughly speaking, this means that the estimates are given in terms
of sizes in different directions and their corresponding derivatives).
On the other hand, we will show that this kind of estimates are not valid
for the more general class of elements verifying the \textsl{MAC} condition.

The definition of the maximum angle condition,
is strongly geometric. In order to find an equivalent condition, more appropriate
for our further computations, we introduce the following definitions.
In what follows, ${\bf e}_i$ will denote the canonical vectors.

\begin{definition}
A tetrahedron $T$ belongs to the family ${\mathcal F}_1$
if its vertices are at ${\bf 0}$, $h_1{\bf e}_1$,
$h_2{\bf e}_2$ and $h_3{\bf e}_3$,
where $h_i>0$ are arbitrary lengths (see Figure 1a).
\end{definition}

\begin{definition}
A tetrahedron $T$ belongs to the family ${\mathcal F}_2$
if its vertices are at ${\bf 0}$, $h_1{\bf e}_1+ h_2 {\bf e}_2$,
$h_2{\bf e}_2$ and $h_3{\bf e}_3$,
where $h_i>0$ are arbitrary lengths (see Figure 1b).
\end{definition}

Note that elements in $\mathcal F_2$ satisfy \textsl{MAC}$(\frac\pi2)$ but they
do not fulfill \textsl{RVP}$(\bar c)$ for any $\bar c$.

\begin{lemma}
\label{lemmaK}
Let $T$ be a tetrahedron satisfying \textsl{MAC}$(\bar\psi)$. Then we have
\begin{enumerate}
\item If $\alpha\le\beta\le\gamma$ are the angles of an arbitrary
face of $T$, then $\gamma\ge\frac\pi3$ and
$\beta,\gamma\in[(\pi-\bar\psi)/2,\bar\psi]$. \item If ${\bf p}_0$ is an
arbitrary vertex of $T$ and $\chi\le \psi\le\phi$ are the angles
between faces passing through ${\bf p}_0$, then $\phi\ge\frac\pi3$ and
$\psi,\phi\in[(\pi-\bar\psi)/2,\bar\psi]$.
\end{enumerate}
\end{lemma}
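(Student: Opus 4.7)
The plan is to reduce both parts to the same elementary arithmetic on three ordered numbers $a\le b\le c$ that are bounded above by $\bar\psi$ and whose sum admits a known lower bound. The only place where geometry enters nontrivially is the control on the sum: it is the triangle angle identity in part 1 and a spherical angle-excess inequality in part 2.

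For part 1, the face is a planar triangle, so $\alpha+\beta+\gamma=\pi$. Combined with $\alpha\le\beta\le\gamma$ this gives $3\gamma\ge\pi$, hence $\gamma\ge\pi/3$. The \textsl{MAC}$(\bar\psi)$ condition provides the upper bound $\gamma\le\bar\psi$ directly, and then $\beta\le\gamma\le\bar\psi$ as well. For the lower bounds I would write $\beta+\gamma=\pi-\alpha\ge\pi-\bar\psi$ and use $\beta\le\gamma$ to get $2\gamma\ge\pi-\bar\psi$; symmetrically $\alpha+\beta=\pi-\gamma\ge\pi-\bar\psi$ together with $\alpha\le\beta$ yields $2\beta\ge\pi-\bar\psi$, so both $\beta$ and $\gamma$ lie in $[(\pi-\bar\psi)/2,\bar\psi]$.

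Part 2 follows the identical pattern once the planar identity is replaced by its spherical analogue. Intersecting $T$ with a small sphere centered at $\mathbf{p}_0$ produces a spherical triangle whose interior angles are exactly the three dihedral angles $\chi,\psi,\phi$ along the edges through $\mathbf{p}_0$. By the classical angle-excess theorem $\chi+\psi+\phi>\pi$ (equivalently, the polar triangle has sides $\pi-\chi,\pi-\psi,\pi-\phi$, whose sum must be less than $2\pi$). Repeating the three bookkeeping steps of part 1 with this strict lower bound in place of $\pi$, and with the \textsl{MAC} upper bounds $\psi,\phi\le\bar\psi$, gives $\phi\ge\pi/3$ and $\psi,\phi\in[(\pi-\bar\psi)/2,\bar\psi]$.

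The only genuinely non-arithmetic step, and hence the main obstacle, is the spherical angle-excess inequality in part 2; everything else is a straightforward manipulation of three ordered reals with a sum constraint and a common upper bound, so no delicate estimate is required.
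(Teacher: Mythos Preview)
Your argument is correct. The paper itself does not prove this lemma at all but simply cites K\v r\'\i\v zek's original paper \cite{K}; your proof---the planar angle-sum identity for part~1 and the spherical angle-excess inequality $\chi+\psi+\phi>\pi$ for part~2, followed in each case by the same ordering arithmetic---is the standard argument and is essentially what one finds in that reference. One small phrasing point: in part~2 you mention only the \textsl{MAC} bounds $\psi,\phi\le\bar\psi$, but the lower-bound step for $\phi$ actually uses $\chi\le\bar\psi$ (and the one for $\psi$ uses $\phi\le\bar\psi$); all three dihedral bounds are of course supplied by the \textsl{MAC} hypothesis, so nothing is missing mathematically.
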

\begin{proof} See \cite{K}.
\end{proof}

For a matrix $M\in\R^{3\times 3}$, $\|M\|$ will denote its infinity norm.
The arguments used in the following theorem are essentially contained in the
proof of Theorem 7 in \cite[page 516]{K}. We include some details here
for the sake of clarity.

\begin{theorem}
\label{mac} Let $T$ be a tetrahedron satisfying \textsl{MAC}$(\bar\psi)$.
Then there exists an element $\widetilde T\in\mathcal F_1\cup
\mathcal F_2$ that can be mapped onto $T$ through an affine
transformation $F(\widetilde\x)=M\widetilde\x + {\bf c}$ with
$\|M\|,\|M^{-1}\|\le C$ where the constant $C$ depends only on
$\bar\psi$.
\end{theorem}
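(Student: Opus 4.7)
The plan is to build the affine map $F(\widetilde\x)=M\widetilde\x+{\bf c}$ explicitly. A decisive simplification is available at the outset: if we choose $M$ so that its columns are unit vectors, then $\|M\|_\infty\le 3$ is automatic, and Cramer's rule controls $\|M^{-1}\|_\infty$ entirely in terms of $|\det M|^{-1}$. So the whole task reduces to producing a choice of $\widetilde T\in\mathcal F_1\cup\mathcal F_2$ and vertex/edge labelling for which $|\det M|$ is bounded below by a constant depending only on $\bar\psi$.

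The first step is to set up coordinates at a favorable vertex. By Lemma~\ref{lemmaK}, in every face of $T$ two of the three angles lie in $[(\pi-\bar\psi)/2,\bar\psi]$, and a similar statement holds for the three dihedral angles at each vertex. I would pick $p_0$ and an adjacent face $F_{12}$ so that the planar angle $\beta$ of $F_{12}$ at $p_0$ lies in this bounded range. After a rigid motion, $p_0=0$, $F_{12}\subset\{x_3=0\}$, and the two edges of $F_{12}$ at $p_0$ lie in the upper half of the $x_1x_2$-plane with the first along the positive $x_1$-axis. The three unit edge directions at $p_0$ then take the form $\ell_1=e_1$, $\ell_2=(\cos\beta,\sin\beta,0)$ with $\sin\beta\ge\sin((\pi-\bar\psi)/2)$, and $\ell_3=(a,b,c)$ with $c\ge 0$ (reflecting $x_3\mapsto -x_3$ if necessary).

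Now split on the size of $c$, i.e.\ on whether the third edge is uniformly transverse to the plane of the first two. If $c\ge c_0(\bar\psi)>0$ for a suitable threshold, I take $\widetilde T\in\mathcal F_1$ with vertices $0$ and $L_i e_i$ ($i=1,2,3$), where $L_i$ is the length of the $i$-th edge of $T$ at $p_0$, and let $M$ have columns $\ell_1,\ell_2,\ell_3$. Then $M\widetilde T=T-p_0$, the columns of $M$ are unit vectors, and $|\det M|=c\sin\beta$ is bounded below. If instead $c<c_0$, the third edge is nearly coplanar with the first two and $\mathcal F_1$ will fail at this vertex. Here I would realize $T$ as the image of an $\mathcal F_2$ element by replacing the first column: take $\widetilde T\in\mathcal F_2$ with parameters $h_1=|p_1-p_2|$, $h_2=L_2$, $h_3=L_3$, and let $M'$ have columns $(p_1-p_2)/|p_1-p_2|$, $\ell_2$, $\ell_3$. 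A short computation gives $|\det M'|=L_1 c\sin\beta/|p_1-p_2|$; to conclude, I must exhibit a lower bound on this quantity. This is where the MAC bound on \emph{dihedral} angles at edge $\ell_2$ is used: it forces the component of $p_1-p_2$ transverse to the plane of $\ell_2,\ell_3$ to carry a uniformly positive fraction of $|p_1-p_2|$. If necessary, one also swaps the labels $\ell_1\leftrightarrow\ell_2$ so that $L_1\ge L_2$, which ensures $|p_1-p_2|$ is of order $L_1$ rather than much larger.

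The main obstacle I anticipate is Case $\mathcal F_2$: weaving the MAC bounds on \emph{both} planar and dihedral angles together to certify uniform linear independence of the three columns of $M'$. The choice of $p_0$ and of which two neighbors to call $p_1, p_2$ is a combinatorial step that must be made so that the planar angle $\beta$ \emph{and} the dihedral angle at edge $\ell_2$ are simultaneously bounded in $[(\pi-\bar\psi)/2,\bar\psi]$. Once this geometric groundwork is in place, the matrix-norm bounds follow immediately from the column-norm construction and Cramer's rule, and the theorem is complete.
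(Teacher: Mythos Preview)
Your $\mathcal F_2$ branch does not close. Your own formula $|\det M'|=L_1\,c\,\sin\beta/|p_1-p_2|$ already exposes the difficulty: you are in the regime $c<c_0$, so the numerator is small, and nothing in your argument forces $|p_1-p_2|$ to be comparably small. The dihedral bound you invoke at edge $\ell_2$ controls only the component of $(p_1-p_2)/|p_1-p_2|$ normal to the plane $\mathrm{span}(\ell_2,\ell_3)$, but $|\det M'|$ equals that component \emph{times} $|\ell_2\times\ell_3|$, and the second factor can be arbitrarily small under MAC. Concretely, take $p_0=0$, $p_1=e_1$, $p_2=e_2$, $p_3=(0,\cos\theta,\sin\theta)$ with $\theta\to 0$. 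Then $\beta=\pi/2$, the dihedral angle along $\ell_2$ is $\pi/2$, and $L_1=L_2$, so every one of your stated combinatorial conditions is met; yet $c=\sin\theta$ and $|\det M'|=\sin\theta/\sqrt2\to 0$. (This family does satisfy MAC uniformly---it even has the regular vertex property at $p_2$---so it is a legitimate test case.)

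The paper's fix is not to replace a column within the frame at $p_0$, but to allow the \emph{base vertex} of the third direction to move. After fixing ${\bf t}_1,{\bf t}_2$ along $p_0p_1,p_0p_2$ (with the face angle $\gamma$ at $p_0$ controlled) and selecting the edge $p_0p_1$ so that the dihedral angle $\omega$ there is controlled, one looks inside the face $p_0p_1p_3$ and sets ${\bf t}_3$ along $p_kp_3$, where $k\in\{0,1\}$ is the vertex carrying the larger of the two angles at $p_0,p_1$; Lemma~\ref{lemmaK} then gives $\sin\xi\ge m$. With this choice one gets directly $|\det({\bf t}_1,{\bf t}_2,{\bf t}_3)|=\sin\gamma\,\sin\omega\,\sin\xi\ge m^3$, and the affine map is based at $p_0$ (yielding $\widetilde T\in\mathcal F_1$) when $k=0$ and at $p_1$ (yielding $\widetilde T\in\mathcal F_2$) when $k=1$. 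In the example above $k=1$ is forced, ${\bf t}_3$ is along $p_1p_3=(-1,\cos\theta,\sin\theta)/\sqrt2$, and the determinant stays near $1/\sqrt2$. The correct dichotomy is therefore an angle comparison inside the face $p_0p_1p_3$, not a threshold on the height $c$ of $\ell_3$.
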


\begin{proof} Given a tetrahedron $T$ we denote with ${\bf p}_i$, $i=0,1,2,3$,
its vertices and use obvious notations for its faces and edges.
Let ${\bf p}_0{\bf p}_1{\bf p}_2$ be an arbitrary face of $T$ and
${\bf p}_3$ its opposite vertex. We can assume that the maximum angle
$\gamma$ of the face ${\bf p}_0{\bf p}_1{\bf p}_2$ is at the vertex ${\bf p}_0$. Then from
Lemma \ref{lemmaK} we have
\[
\sin\gamma\ge
m:=\min\left\{\sin\frac{\pi-\bar\psi}{2},\sin\bar\psi\right\}.
\]
Let ${\bf t_1}$ and ${\bf t_2}$ be unit vectors along the edges ${\bf p}_0{\bf p}_1$
and ${\bf p}_0{\bf p}_2$.  We can also assume that the angle $\omega$ between the
faces ${\bf p}_0{\bf p}_1{\bf p}_2$ and ${\bf p}_0{\bf p}_1{\bf p}_3$ is not less than the angle
between ${\bf p}_0{\bf p}_1{\bf p}_2$ and ${\bf p}_0{\bf p}_2{\bf p}_3$ (otherwise we interchange the
notation between the vertices ${\bf p}_1$ and ${\bf p}_2$). Then, again from Lemma
\ref{lemmaK} we have
\[
\sin\omega\ge m.
\]
Now consider the triangle ${\bf p}_0{\bf p}_1{\bf p}_3$ and choose $k\in\{0,1\}$ so
that the angle $\xi$ at the vertex ${\bf p}_k$ is not less than that at the
vertex ${\bf p}_{1-k}$. Using again Lemma \ref{lemmaK} we obtain
\[
\sin\xi\ge m.
\]
We take now ${\bf t_3}$ as the unit vector along ${\bf p}_k{\bf p}_3$ and define
$M_0$ as the matrix made up with ${\bf t_1}, {\bf t_2}$ and ${\bf t_3}$
as its columns.
Since the columns of $M_0$ are unitary vectors we have $\|M_0\|\le 3$.
Also, the adjugate matrix of $M_0$ has coefficients with absolute value
bounded by 2 and therefore, $\|M_0^{-1}\|\le 6/|\det M_0|$.
Then, to obtain the desired bound for $\|M_0^{-1}\|$ it is enough to
show that $|\det M_0|$ is bounded by below by a constant which depends only
on $\bar \psi$.

Consider the parallelepiped generated by the vectors ${\bf t_1}, {\bf t_2}$
and ${\bf t_3}$. Let $z$ be its height in the direction
perpendicular to ${\bf t_1}$ and ${\bf t_2}$ and
$y$ the height of the face generated by ${\bf t_1}$ and ${\bf t_3}$
in the direction perpendicular to ${\bf t_1}$.

Since
$\|{\bf t_i}\|=1$ we have
$$
|\det M_0|=z\sin\gamma=y\sin\omega\sin\gamma=\sin\xi\sin\omega\sin\gamma
\ge m^3.
$$
as we wanted to prove.

Obviously, the same properties are satisfied by the
matrix $M_1$ made up with ${\bf t_2}, -{\bf t_1}$ and ${\bf t_3}$,
as its columns.

Now, define $h_1=|{\bf p}_0{\bf p}_1|$, $h_2=|{\bf p}_0{\bf p}_2|$ and $h_3=|{\bf p}_k{\bf p}_3|$.
If $k=0$ take $\widetilde T \in \mathcal F_1$ with
vertices at ${\bf 0},h_1{\bf e}_1, h_2{\bf e}_2$ and  $h_3{\bf e}_3$
and if $k=1$ take
$\widetilde T \in \mathcal F_2$ with
vertices at ${\bf 0},h_1{\bf e}_1+ h_2 {\bf e}_2,
h_2{\bf e}_2$ and $h_3{\bf e}_3$. Then it is easy to
check that $\widetilde\x\mapsto M_k\widetilde\x+ {\bf p}_k$ maps
$\widetilde T$ onto $T$.
\end{proof}

As mentioned above, the regular vertex property is stronger
than the maximum angle condition. Indeed, the following
theorem shows that, under \textsl{RVP}$(\bar c)$, the reference
family in the previous theorem can be restricted to $\mathcal F_1$.

\begin{theorem}
\label{sobrervp} Let $T$ be a tetrahedron satisfying
\textsl{RVP}$(\bar c)$. Then, there exists an element $\widetilde
T\in\mathcal F_1$ that can be mapped onto $T$ through an affine
transformation $F(\widetilde{ {\bf x}})=M \widetilde{\bf{x}}+{\bf
p}_0$ with $\|M\|,\|M^{-1}\|\le C$ where the constant $C$ depends
only on $\bar c$. Furthermore, if $h_i, i=1,2,3$ are the lengths
of the edges of $T$ sharing the vertex ${\bf p}_0$, we can take
$\widetilde{T}\in {\mathcal F}_1$ such that, for $i=1,2,3$, $h_i$
is the length in the direction ${\bf e}_i$.
\end{theorem}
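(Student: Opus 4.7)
The plan is to proceed almost identically to the proof of Theorem~\ref{mac}, but now exploiting the fact that the regular vertex property directly supplies a vertex at which the edge directions are uniformly linearly independent, so no case distinction (and no auxiliary geometric construction of a ``good'' triple of unit vectors) is required.

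First I would take ${\bf p}_0$ to be the distinguished vertex from the definition of \textsl{RVP}$(\bar c)$, let $\l_1,\l_2,\l_3$ be the unit vectors along the three edges of $T$ sharing ${\bf p}_0$, and let $h_1,h_2,h_3$ be the corresponding edge lengths. Let $M$ be the matrix with columns $\l_1,\l_2,\l_3$, and let $\widetilde T\in\mathcal F_1$ be the tetrahedron with vertices ${\bf 0}$, $h_1{\bf e}_1$, $h_2{\bf e}_2$, $h_3{\bf e}_3$. Then by construction $F(\widetilde\x)=M\widetilde\x+{\bf p}_0$ sends ${\bf 0}\mapsto {\bf p}_0$ and $h_i{\bf e}_i\mapsto {\bf p}_0+h_i\l_i$, i.e.\ the vertex of $T$ at the other end of the $i$-th edge, so $F$ maps $\widetilde T$ onto $T$. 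This also gives directly the ``furthermore'' assertion: the edge of $\widetilde T$ of length $h_i$ lies along ${\bf e}_i$.

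It remains to bound $\|M\|$ and $\|M^{-1}\|$ in terms of $\bar c$ only. For $\|M\|$ I use that each column of $M$ is a unit vector, so every entry has absolute value at most $1$ and hence $\|M\|\le 3$. For $\|M^{-1}\|$ I recall the formula $M^{-1}=\operatorname{adj}(M)/\det M$; since each entry of the adjugate of a matrix whose columns are unit vectors is a $2\times 2$ minor of such a matrix, each entry is bounded in absolute value by~$2$, so $\|\operatorname{adj}(M)\|\le 6$. The hypothesis \textsl{RVP}$(\bar c)$ gives $|\det M|>\bar c$, hence $\|M^{-1}\|\le 6/\bar c$. Both bounds depend only on $\bar c$.

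There is really no obstacle in this argument: the whole point is that \textsl{RVP} was defined precisely so that the matrix of edge-direction unit vectors at ${\bf p}_0$ has a uniformly bounded inverse. Compared with Theorem~\ref{mac}, what the present theorem says is that, when this stronger hypothesis holds, one may always take the reference tetrahedron in $\mathcal F_1$ rather than in $\mathcal F_1\cup\mathcal F_2$, because the third unit vector ${\bf t_3}$ can be chosen along an edge emanating from the \emph{same} vertex ${\bf p}_0$ (no need for the auxiliary choice $k\in\{0,1\}$ that forced the possibility $\widetilde T\in\mathcal F_2$ in the previous proof).
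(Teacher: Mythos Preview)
Your proof is correct and follows essentially the same approach as the paper: take $M$ to be the matrix with columns $\l_1,\l_2,\l_3$, use the definition of \textsl{RVP}$(\bar c)$ to bound $|\det M|$ from below, and check that $F(\widetilde\x)=M\widetilde\x+{\bf p}_0$ maps the $\mathcal F_1$ element with edge lengths $h_i$ onto $T$. The only difference is cosmetic: the paper simply states that the bounds on $\|M\|$ and $\|M^{-1}\|$ are ``easy to check,'' whereas you spell them out explicitly (with the same adjugate argument used in the proof of Theorem~\ref{mac}).
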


\begin{proof} Let ${\bf p}_0$ and ${\l_i}$ be as in the
definition of \textsl{RVP}$(\bar c)$ and $h_i$ be the length of the
edge of $T$ with direction $\l_i$.

Take $M$ as the matrix made up with $\l_i$
as its columns. Since $|\det(M)|>\bar c$ and $\l_i$ are unitary vectors
then it is easy to check that
$\|M\|\le C$ and $\|M^{-1}\|\le C$ with a constant $C$ depending only
on a lower bound of $|\det(M)|$ and therefore on $\bar c$.

Then, if $\widetilde T$ is the tetrahedron of $\mathcal F_1$ with
with lengths $h_i$ in the directions ${\bf e}_i$, the affine transformation $F(\widetilde
\x)=M\widetilde\x+\bf p_0$ maps $\widetilde T$ onto $T$.
\end{proof}

\begin{remark} It is not difficult to see that the converses of
  Theorems \ref{mac} and \ref{sobrervp} hold true. Namely, the family
  of elements obtained by transforming $\mathcal F_1\cup\mathcal F_2$
  (resp. $\mathcal F_1)$)
by affine maps $\widetilde\x\mapsto M\widetilde\x+ {\bf c}$, where
$\|M\|,\|M^{-1}\|\le C$, satisfies \textsl{MAC}$(\bar\psi)$ (resp. \textsl{RVP}$(\bar c)$)
for some $\bar\psi$ (resp. $\bar c$) which depends only on $C$.
\end{remark}

\setcounter{equation}{0}
\section{Stability under the regular vertex property}

The goal of this section is to prove the
stability in $L^p$ for the Raviart-Thomas interpolation  of
arbitrary order of functions in $W^{1,p}(T)^3$, for families
of elements satisfying the regular vertex property.
Precisely, the main result of this section is the following theorem.

\begin{theorem}
\label{mainrvp}
Let $k\ge0$ and $T$ be a tetrahedron satisfying \textsl{RVP}$(\bar c)$. If
${\bf p}_0$ is the regular vertex, $\l_i, i=1,2,3$ are unitary
vectors with the directions of the edges sharing ${\bf p}_0$,
$h_i, i=1,2,3$, the lengths of these edges, and $h_T$ the diameter of $T$ then,
there exists a constant $C$ depending only on $k$ and $\bar c$ such that,
for all $\u\in\W1p(T)^3$, $1\le p\le \infty$,
$$
\left\|\Pi_k\u\right\|_{\Lp(T)}\le C\left( \|\u\|_{\Lp(T)} +
\sum_{i,j} h_j\left\|\frac{\partial u_i}{\partial
\ell_j}\right\|_{\Lp(T)} + h_T\|\d \u\|_{\Lp(T)}\right).
$$
\end{theorem}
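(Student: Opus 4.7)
The plan is to reduce the problem, by Theorem \ref{sobrervp} and the Piola transform, to a sharp component-wise stability estimate on a fixed reference tetrahedron, and then to establish that reference-element estimate by the ``kill-degrees-of-freedom'' strategy sketched in the introduction.

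First I would use Theorem \ref{sobrervp} to obtain $\widetilde T\in\mathcal F_1$ (with edges of length $h_i$ in direction $\mathbf e_i$) and an affine map $F(\widetilde\x)=M\widetilde\x+\p_0$ with $\|M\|,\|M^{-1}\|\le C$. Pulling $\u$ back to $\widetilde T$ by the Piola transform $\tu(\widetilde\x)=|\det M|\,M^{-1}\u(F(\widetilde\x))$ yields a vector field whose Raviart--Thomas interpolant is the Piola pullback of $\Pi_k\u$; moreover the boundedness of $M,M^{-1}$ makes $L^p$-norms, divergences, and directional derivatives along $\l_j$ (resp.\ along $\mathbf e_j$) equivalent, up to a constant depending only on $\bar c$. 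A second, anisotropic Piola transform, associated with $\widetilde\x=D\hat\x$ and $D=\mathrm{diag}(h_1,h_2,h_3)$, carries $\widetilde T$ onto the fixed unit tetrahedron $\hT$ with vertices $\mathbf 0,\mathbf e_1,\mathbf e_2,\mathbf e_3$. Tracking the weights introduced by this rescaling, the theorem reduces to a component-wise bound on $\hT$ of the form
\[
\|(\Pi_k\hat\u)_i\|_{L^p(\hT)}\le C\bigl(\|\hat\u\|_{L^p(\hT)}+\|\partial\hat u_i/\partial\hat x_i\|_{L^p(\hT)}+\|\d\hat\u\|_{L^p(\hT)}\bigr),\qquad i=1,2,3,
\]
the weights $h_j$ and $h_T$ being recovered automatically when the two Piola transforms are reversed.

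The core of the argument is to prove this reference-element bound. Fix $i=1$; the other cases are symmetric on $\hT\in\mathcal F_1$. The degrees of freedom defining $(\Pi_k\hat\u)_1$ are the face moments $\int_F\hat\u\cdot\n\,p_k$ and the interior moments $\int_{\hT}\hat\u\cdot\q_{k-1}$. Since on $\hT\in\mathcal F_1$ the three coordinate faces $\{\hat x_j=0\}$ have normal $\pm\mathbf e_j$, the moments on $\{\hat x_j=0\}$ involve only $\hat u_j$; only the oblique face mixes all three components. The plan is to modify $\hat\u$ into $\hat\w$ without altering $(\Pi_k\hat\u)_1$ so that every degree of freedom not involving $\hat u_1$ is killed. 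This rests on a key lemma, which I would prove first: if $\mathbf r=(0,r_2,r_3)$ with $r_j$ independent of $\hat x_j$, then $(\Pi_k\mathbf r)_1=0$. Granting it, I would (a) subtract from $\hat u_j$ its trace on $\{\hat x_j=0\}$ for $j=2,3$, killing the face moments on those two coordinate faces, and then (b) further subtract an element of $\RT_k$ of the form $(0,\hat x_2\alpha_2,\hat x_3\alpha_3)$ with $\alpha_j\in\Cp_{k-1}$ chosen so that the interior moments tested against the $\mathbf e_j$-components ($j=2,3$) vanish. After these modifications the surviving degrees of freedom are: the face moment on $\{\hat x_1=0\}$ (controlled by $\hat u_1$ via an anisotropic trace inequality that produces $\partial\hat u_1/\partial\hat x_1$), the interior moments paired with $\mathbf e_1$ (involving only $\hat u_1$), and the oblique-face moment, which I would process by Green's formula so that the unavoidable mixing of components there is absorbed into a term proportional to $\|\d\hat\u\|_{L^p(\hT)}$.

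The main obstacle will be step (b): showing that the finite-dimensional system determining the correction polynomials $\alpha_2,\alpha_3$ has a unique solution and bounding their $L^p$-norms in terms of the data of $\hat\u$ by norm-equivalence on $\Cp_{k-1}$. A secondary difficulty is to treat the oblique-face moment cleanly so that the inevitable coupling among $\hat u_1,\hat u_2,\hat u_3$ is absorbed entirely into $\d\hat\u$, whose weight is the diameter $h_T$ rather than any single $h_j$; this is precisely what forces the $h_T$ weight on the divergence term in the final estimate.
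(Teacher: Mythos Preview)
Your plan is essentially the paper's own argument: the same reduction via Theorem~\ref{sobrervp} and the Piola transform to $\mathcal F_1$ and then to the unit simplex, the same key lemma (your ``if $\mathbf r=(0,r_2,r_3)$ with $r_j$ independent of $\hat x_j$ then $(\Pi_k\mathbf r)_1=0$'' is the paper's Lemma~\ref{lemma1}), the same two-step modification (subtract traces, then subtract $(0,\hat x_2\alpha_2,\hat x_3\alpha_3)\in\RT_k$), and the same Green-formula trick to absorb the oblique-face coupling into the divergence.

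One point to tighten: the reference-element bound you write has the full vector norm $\|\hat\u\|_{L^p(\hT)}$ on the right, but after the anisotropic Piola rescaling the contribution of $\|\hat u_j\|_{L^p(\hT)}$ for $j\neq i$ picks up a factor $(h_i/h_j)^p$, which is not uniformly bounded. Your detailed description (only $\hat u_1$ and $\d\hat\u$ survive) shows you will actually obtain $\|\hat u_i\|_{L^p(\hT)}$ alone, which is what the paper proves and what scales correctly; just make sure the stated inequality reflects that.
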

\bigskip

The theorem will follow by Theorem \ref{sobrervp} once we have proved
error estimates for elements in the family $\mathcal F_1$.

First we will prove appropriate estimates in the reference element
$\widehat T$ defined as the tetrahedron with vertices at
$(0,0,0), (1,0,0), (0,1,0)$ and $(0,0,1)$.
This is the object of the next two lemmas. Afterwards, estimates
for elements in $\mathcal F_1$ will be obtained by scaling arguments.

We denote with $\widehat F_i$ the face of
$\widehat T$ normal to $\n_i$, with $\n_1=(-1,0,0),
\n_2=(0,-1,0), \n_3=(0,0,-1)$ and $\n_4=\frac1{\sqrt{3}}(1,1,1)$.
We will use the same notation for a function
of two variables than for its extension to
$\widehat T$ as a function independent of the other variable,
for example, $f(x_2,x_3)$ will denote a function defined on
$\widehat F_1$ as well as one defined in $\widehat T$ (anyway,
the meaning in each case will be clear from the context).
In the same way, the same notation will be used to denote a polynomial
$p_k$ on a face and a polynomial in three variables such that
its restriction to that face agrees with $p_k$. For example,
for $p_k\in\mathcal{P}_k(\widehat F_4)$ we will write $p_{k}(1-x_2-x_3,x_2,x_3)$.
In what follows $\widehat\Pi_{k,i}\u$ denotes the $i$-th component
of $\widehat\Pi_k\u$.

\begin{lemma}
\label{lemma1}
Let $f\in L^p(\widehat F_1)$, $g\in L^p(\widehat F_2)$,
and $h\in L^p(\widehat F_3)$. If
$$
\u(x_1,x_2,x_3)=(f(x_2,x_3),0,0),
\quad\vv(x_1,x_2,x_3)=(0,g(x_1,x_3),0),
$$
and
$$
\w(x_1,x_2,x_3)=(0,0,h(x_1,x_2))
$$
then, their Raviart-Thomas interpolations are of the same form,
namely, there exist $q_i\in\mathcal{P}_k(\widehat F_i)$, $i=1,2,3$,
such that
$$
\widehat\Pi_k\u=(q_1(x_2,x_3),0,0),\quad
\widehat\Pi_k\vv=(0,q_2(x_1,x_3),0),
$$
and
$$
\widehat\Pi_k{\bf w}=(0,0,q_3(x_1,x_2)).
$$
\end{lemma}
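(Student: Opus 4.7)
The approach I would take relies on the unisolvence of the Raviart--Thomas degrees of freedom: to identify $\widehat\Pi_k\u$ it is enough to exhibit any element of $\RT_k$ of the claimed form whose DOFs (\ref{RTcaras})--(\ref{RTadentro}) agree with those of $\u$. For $\u=(f(x_2,x_3),0,0)$ I would let $q_1\in\mathcal{P}_k(\widehat F_1)$ be the $L^2(\widehat F_1)$-projection of $f$ onto $\mathcal{P}_k$ (with measure $dx_2\,dx_3$), extend it trivially to $\hT$ as $q_1(x_2,x_3)$, and take $\bs:=(q_1(x_2,x_3),0,0)\in\mathcal{P}_k(\hT)^3\subset\RT_k$ as candidate. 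Since the second and third components of $\u-\bs$ vanish identically, the DOFs on $\widehat F_2$ and $\widehat F_3$ are trivially zero for both $\u$ and $\bs$, while on $\widehat F_1$ (with outward normal $-{\bf e}_1$) the DOF equation reduces to $\int_{\widehat F_1}(f-q_1)\,p_k=0$, which is the defining projection identity.

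The two remaining checks are on $\widehat F_4$ and in the interior. For $\widehat F_4$ I would parametrize by $(x_2,x_3)\in\widehat F_1$ with $x_1=1-x_2-x_3$; the surface measure becomes $\sqrt{3}\,dx_2\,dx_3$ and any $p_k\in\mathcal{P}_k(\widehat F_4)$ becomes a polynomial of degree at most $k$ in $(x_2,x_3)$, so this DOF collapses once more to the same $L^2(\widehat F_1)$-orthogonality. For the interior DOFs the only non-trivial scalar product is $\int_{\hT}(f-q_1)\,p_{k-1,1}\,dx$; applying Fubini in $x_1$ turns this into $\int_{\widehat F_1}(f-q_1)\,\widetilde P\,dx_2\,dx_3$, where $\widetilde P(x_2,x_3):=\int_0^{1-x_2-x_3}p_{k-1,1}(x_1,x_2,x_3)\,dx_1$. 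A direct check on monomials shows $\widetilde P$ is a polynomial in $(x_2,x_3)$ of degree at most $k$ (the $x_1$-integration raises the degree by one, compensating the drop from $p_{k-1,1}$), so this integral again vanishes. Unisolvence then forces $\widehat\Pi_k\u=\bs$, and the statements for $\vv$ and $\w$ follow by the same argument with the coordinate roles permuted.

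The main obstacle is more a careful bookkeeping observation than a technical hurdle: recognizing that the four a priori distinct DOF conditions on $\u-\bs$ all collapse, via Fubini for the interior DOF and via the parametrization of $\widehat F_4$, to a single $L^2(\widehat F_1)$-orthogonality against polynomials of degree at most $k$, so that one projection $q_1$ simultaneously kills all of them.
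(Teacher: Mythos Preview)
Your argument is correct and takes a genuinely different route from the paper. The paper does \emph{not} construct $q_1$ explicitly; instead it argues indirectly: since $\d\u=0$ one has $\d\widehat\Pi_k\u=0$ by the commuting-diagram property, whence $\widehat\Pi_k\u\in\mathcal{P}_k(\hT)^3$; then the face DOFs on $\widehat F_2,\widehat F_3$ force $\widehat\Pi_{k,i}\u|_{\widehat F_i}=0$ for $i=2,3$, so $\widehat\Pi_{k,i}\u=x_ir_i$ with $r_i\in\mathcal{P}_{k-1}$, and the interior DOFs then give $r_i=0$; finally $\d\widehat\Pi_k\u=0$ yields $\partial_{x_1}\widehat\Pi_{k,1}\u=0$. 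Your approach instead \emph{guesses} the answer---$q_1$ is the $L^2(\widehat F_1)$-projection of $f$ onto $\mathcal P_k$---and verifies all DOFs directly, the key observation being that the $\widehat F_4$ and interior DOFs both collapse, via the affine parametrization and Fubini respectively, to the same orthogonality on $\widehat F_1$. Your proof is more constructive (it identifies $q_1$) and avoids invoking the commuting-diagram property; the paper's proof is shorter and leans on that standard tool. Both are clean; yours has the minor advantage of giving an explicit formula for $q_1$, though the paper never needs it.
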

\begin{proof} Let us prove for example the first equality, the other two are
obviously analogous.
Since $\d\u=0$, we have that $\d\widehat\Pi_k\u=0$ and therefore,
from a well known property of the Raviart-Thomas
interpolation (see for example \cite{BF,D3}), it follows that
$\widehat\Pi_k\u\in \mathcal{P}_k(\widehat T)^3$.

On the other hand, using now (\ref{RTcaras}) for $i=2,3$, and that
$u_2=u_3=0$, we have
$$
\int_{\widehat F_i}\widehat\Pi_{k,i}\u\, p_k = 0
\qquad\forall p_k\in\mathcal P_k(F_i),
\quad i=2,3,
$$
and then, taking $p_k=\widehat\Pi_{k,i}\u$, we conclude that
$\widehat\Pi_{k,i}\u|_{\widehat F_i}=0$ for $i=2,3$. Therefore
$\widehat\Pi_{k,i}\u=x_i r_i$ for some $r_i\in\mathcal{P}_{k-1}(\widehat T)$
and so, using now (\ref{RTadentro}) and again that $u_2=u_3=0$,
we obtain that, for $i=2,3$, $\widehat\Pi_{k,i}\u=0$ in $\widehat T$  as we wanted
to show.

Finally, since $\d\widehat\Pi_k\u=0$, it follows that
$\frac{\partial\widehat\Pi_{k,1}\u}{\partial x_1}=0$ and so,
$\widehat\Pi_{k,1}\u$ is independent of $x_1$.
\end{proof}

\begin{lemma}
There exists a constant $C$ depending only on $k$ such that, for all
$\u=(u_1,u_2,u_3)\in \W1p(\hT)^3$,
\begin{eqnarray}\label{ineq1}
\|\widehat\Pi_{k,i}\u\|_{\Lp(\hT)}&\le& C\left(\|u_i\|_{\W1p(\hT)}
+ \|\d\u\|_{\Lp(\hT)}\right), \qquad i=1,2,3.
\end{eqnarray}
\end{lemma}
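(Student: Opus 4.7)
By the symmetry of $\hT$ under coordinate permutations, it suffices to prove the bound for $i=1$; the cases $i=2,3$ follow by permuting the roles of the coordinates. The plan is the three-dimensional analogue of the modification argument sketched for triangles in the introduction.

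First I would exploit Lemma~\ref{lemma1} to modify $\u$ without disturbing $\widehat\Pi_{k,1}\u$. Set $\vv=\u-(0,u_2(x_1,0,x_3),u_3(x_1,x_2,0))$: each subtracted summand falls into the template of Lemma~\ref{lemma1} and so has vanishing first component after interpolation, so $\widehat\Pi_{k,1}\vv=\widehat\Pi_{k,1}\u$, while $v_2|_{\widehat F_2}=0$ and $v_3|_{\widehat F_3}=0$. Next, choose $\alpha_j\in\Cp_{k-1}(\hT)$, $j=2,3$, as the unique solutions of
\[
\int_{\hT}x_j\alpha_j\,\phi=\int_{\hT}v_j\,\phi\qquad\forall\phi\in\Cp_{k-1}(\hT),
\]
well defined because the bilinear form $(\alpha,\phi)\mapsto\int x_j\alpha\phi$ is positive definite on $\Cp_{k-1}$. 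Define $\w=\vv-(0,x_2\alpha_2,x_3\alpha_3)$. Since $(0,x_2\alpha_2,x_3\alpha_3)\in\Cp_k^3\subset\RT_k$ and has zero first component, $\widehat\Pi_{k,1}\w=\widehat\Pi_{k,1}\u$ still holds; moreover $w_2|_{\widehat F_2}=0$, $w_3|_{\widehat F_3}=0$, and $\int_{\hT}w_2\phi=\int_{\hT}w_3\phi=0$ for every $\phi\in\Cp_{k-1}$. Finite-dimensional norm equivalence on $\Cp_{k-1}(\hT)$ then gives $\|\alpha_j\|_{\Lp(\hT)}\le C\|v_j\|_{\Lp(\hT)}$.

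Expanding $\widehat\Pi_k\w$ in the dual basis of $\RT_k$ on the fixed reference element, the $\widehat F_2,\widehat F_3$ face DOFs and the interior DOFs against $(0,\phi,0)$ and $(0,0,\phi)$ all vanish by construction. The $\widehat F_1$ face DOFs and the interior DOFs against $(\phi,0,0)$ only see $u_1$, and are controlled by $C\|u_1\|_{\W1p(\hT)}$ via the trace theorem and H\"older's inequality on $\hT$. The $\widehat F_4$ face DOFs are the delicate ones: for any extension $P\in\Cp_k(\hT)$ of $p\in\Cp_k(\widehat F_4)$, Green's formula applied to $\w$ yields
\[
\int_{\widehat F_4}\w\cdot\n_4\,P=\int_{\hT}\d\w\,P+\int_{\hT}\w\cdot\nabla P+\int_{\widehat F_1}u_1 P,
\]
since the $\widehat F_2,\widehat F_3$ boundary contributions vanish; the interior moment conditions on $w_2,w_3$ collapse $\int\w\cdot\nabla P$ to $\int u_1\partial_1 P$, and $\d\w=\d\u-\partial_2(x_2\alpha_2)-\partial_3(x_3\alpha_3)$.

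The main obstacle is to bound the $\alpha$-contribution $\int_{\hT}[\partial_2(x_2\alpha_2)+\partial_3(x_3\alpha_3)]P$ by the right-hand side of the lemma, since the naive estimate $\|\alpha_j\|_{\Lp}\le C\|v_j\|_{\Lp}\le C\|\partial_j u_j\|_{\Lp}$ involves individual partial derivatives of $u_2$ and $u_3$ that are not controlled by $\|u_1\|_{\W1p}+\|\d\u\|_{\Lp}$ in three dimensions (only the combination $\partial_2 u_2+\partial_3 u_3=\d\u-\partial_1 u_1$ is). My plan is to integrate by parts in the $\alpha$-integral, using that $\partial_j P\in\Cp_{k-1}$ so the defining equations of $\alpha_2,\alpha_3$ replace $x_j\alpha_j$-terms by $v_j$-terms, and then to apply Green's formula to the auxiliary field $(0,v_2,v_3)$, whose divergence equals $\partial_2 u_2+\partial_3 u_3=\d\u-\partial_1 u_1$ and whose normal traces on $\widehat F_2,\widehat F_3$ vanish. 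After cancellation, the two separately uncontrolled $\alpha_j$-pieces reassemble into quantities bounded by $\|u_1\|_{\W1p}+\|\d\u\|_{\Lp}$, and summing the bounded DOF contributions over the finite RT dual basis yields the stated estimate.
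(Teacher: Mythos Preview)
Your construction of $\vv$ and $\w$, the use of Lemma~\ref{lemma1}, and the Green's-formula reduction of the $\widehat F_4$ degrees of freedom all match the paper. The gap is in your plan for the $\alpha$-contribution. If you integrate $\int_{\hT}\d(0,x_2\alpha_2,x_3\alpha_3)\,P$ by parts as you propose, the interior term indeed becomes $-\int_{\hT}(v_2\partial_2P+v_3\partial_3P)$ via the moment equations, but a boundary term $\frac1{\sqrt3}\int_{\widehat F_4}(x_2\alpha_2+x_3\alpha_3)P$ survives, since $(0,x_2\alpha_2,x_3\alpha_3)\cdot\n_4$ does not vanish on $\widehat F_4$. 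Your subsequent Green's formula for $(0,v_2,v_3)$ produces the analogous term $\frac1{\sqrt3}\int_{\widehat F_4}(v_2+v_3)P$. Combining, the $\alpha$-integral equals $\int_{\hT}\d(0,v_2,v_3)P-\frac1{\sqrt3}\int_{\widehat F_4}(w_2+w_3)P$, and substituting this back into the $\widehat F_4$ DOF expression cancels the $(w_2+w_3)$-part of that very DOF. What remains is the trivial identity $\frac1{\sqrt3}\int_{\widehat F_4}u_1P=\int_{\hT}\partial_1u_1\,P+\int_{\hT}u_1\partial_1P+\int_{\widehat F_1}u_1P$, i.e.\ Green's formula for $u_1$ alone. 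No bound on the full $\widehat F_4$ degree of freedom has been obtained.

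The paper avoids this circularity by \emph{not} integrating by parts against the particular extension $P$. Instead it bounds $\|\d(0,x_2\alpha_2,x_3\alpha_3)\|_{L^p(\hT)}$ directly: one tests $(0,w_2,w_3)$ against $\nabla\big((1-x_1-x_2-x_3)p_{k-1}\big)$ for arbitrary $p_{k-1}\in\Cp_{k-1}$. The factor $(1-x_1-x_2-x_3)$ vanishes on $\widehat F_4$, so the boundary term disappears and one gets the weighted identity
\[
\int_{\hT}(1-x_1-x_2-x_3)\,\d(0,x_2\alpha_2,x_3\alpha_3)\,p_{k-1}
=\int_{\hT}(1-x_1-x_2-x_3)\,\d(0,v_2,v_3)\,p_{k-1}.
\]
Taking $p_{k-1}=\d(0,x_2\alpha_2,x_3\alpha_3)\in\Cp_{k-1}$ and using equivalence of norms on $\Cp_{k-1}(\hT)$ gives $\|\d(0,x_2\alpha_2,x_3\alpha_3)\|_{L^p}\le C\|\d(0,v_2,v_3)\|_{L^p}=C\|\d\u-\partial_1u_1\|_{L^p}$, which closes the argument. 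The missing idea in your sketch is precisely this choice of a test function that kills the $\widehat F_4$ boundary contribution.
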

\begin{proof}
From the previous lemma we know that, if
$$
\vv=(u_1,u_2-u_2(x_1,0,x_3), u_3-u_3(x_1,x_2,0))
$$
then,
$\widehat\Pi_{k,1}\u = \widehat\Pi_{k,1}\vv$.

Let $\alpha,\beta\in\mathcal P_{k-1}(\widehat T)$ be
such that
\begin{equation}
\label{eq10}
\int_{\widehat T}(v_2-x_2\alpha)\,p_{k-1}=0\quad
\mbox{and}\quad
\int_{\widehat T}(v_3-x_3\beta)\,p_{k-1} =0\qquad
\forall p_{k-1}\in\mathcal{P}_{k-1}(\widehat T).
\end{equation}
Observe that those $\alpha$ and $\beta$ exist. Indeed, it is easy to
prove uniqueness (and therefore existence) of solution of the square
linear systems of equations defining them.

Define now $\w=(v_1,v_2-x_2\alpha,v_3-x_3\beta)$.
Since $(0,x_2\alpha,x_3\beta)\in\RT_k(\widehat T)$ it follows that
$\widehat\Pi_{k,1}\vv=\widehat\Pi_{k,1}\w$ and therefore
$\widehat\Pi_{k,1}\u=\widehat\Pi_{k,1}\w$.

Taking into account that $w_2|_{\widehat F_2}=0$ and
$w_3|_{\widehat F_3}=0$ and the equations (\ref{eq10}), it follows
that $\widehat\Pi\w$ is determined by the equations
\begin{eqnarray}\nonumber
\int_{\widehat T}\widehat\Pi_{k,1}\w\, p_{k-1} & = &
\int_{\widehat T}w_1 \,p_{k-1} \qquad \forall p_{k-1}\in\mathcal
P_{k-1}(\widehat
T)\\
\nonumber \int_{\widehat T}\widehat\Pi_{k,2}\w\, p_{k-1} & = & 0
\qquad \forall p_{k-1}\in\mathcal P_{k-1}(\widehat
T)\\
\label{eq13} \int_{\widehat T}\widehat\Pi_{k,3}\w\, p_{k-1} & = &
0 \qquad \forall p_{k-1}\in\mathcal P_{k-1}(\widehat T)\\\nonumber
\int_{\widehat F_1} \widehat\Pi_{k,1}\w\, p_k &=& \int_{\widehat
F_1}w_1\,p_k \qquad \forall p_{k}\in\mathcal P_{k}(\widehat
F_1)\\\nonumber \int_{\widehat F_2} \widehat\Pi_{k,2}\w\, p_k &=&0
\qquad \forall p_{k}\in\mathcal P_{k}(\widehat F_2)\\\nonumber
\int_{\widehat F_3} \widehat\Pi_{k,3}\w\, p_k &=& 0 \qquad \forall
p_{k}\in\mathcal P_{k}(\widehat F_3)\\\nonumber \int_{\widehat
F_4} (\widehat\Pi_{k,1}\w + \widehat \Pi_{k,2}\w + \widehat
\Pi_{k,3}\w)\, p_k &=& \int_{\widehat F_4}(w_1+w_2+w_3)\,p_k
\qquad \forall p_{k}\in\mathcal P_{k}(\widehat F_4).
\end{eqnarray}

Now, for $p_k\in\mathcal P_{k}(\widehat T)$, we have

\begin{eqnarray*}
\int_{\widehat T}\d  \w p_k &=&-\int_{\widehat T}\w\cdot \nabla
p_k + \int_{\partial\widehat T}\w\cdot \n\,p_k\\
&=&-\int_{\widehat T}\w\cdot \nabla p_k +
\frac1{\sqrt{3}}\int_{\widehat F_4}(w_1+w_2+w_3)p_k +
\int_{\partial\widehat T\setminus\widehat F_4}\w\cdot \n\,p_k
\end{eqnarray*}
but, from the definition of $\w$, we have
\[
-\int_{\widehat T}\w\cdot \nabla p_k=-\int_{\widehat
T}w_1\,\frac{\partial p_k}{\partial x_1}\qquad\mbox{and}\qquad
\int_{\partial\widehat T\setminus\widehat F_4}\w\cdot \n\,p_k =
-\int_{\widehat F_1}w_1\,p_k,
\]
therefore, for all $p_{k}\in\mathcal P_{k}(\widehat T)$,
\begin{equation}\label{eq12}
\frac1{\sqrt{3}}\int_{\widehat F_4}(w_1+w_2+w_3)p_k =
\int_{\widehat T}\d \w\,p_{k} + \int_{\widehat
T}w_1\,\frac{\partial p_k}{\partial x_1} + \int_{\widehat
F_1}w_1\,p_k.
\end{equation}
But,
\[
\d \w=\d\vv-\d (0,x_2\alpha,x_3\beta)
= \d\u-\d (0,x_2\alpha,x_3\beta).
\]
So, using (\ref{eq12}), (\ref{eq13}), and standard arguments, we
obtain
\begin{eqnarray*}
\lefteqn{\|\widehat\Pi_{k,1}\u\|_{\Lp(\hT)} =
\|\widehat\Pi_{k,1}\w\|_{\Lp(\hT)}}\\&\le& C\left( \|u_1\|_{\W1p(\hT)}
+ \|\d \u\|_{\Lp(\hT)} + \|\d (0,x_2\alpha,x_3\beta)\|_{\Lp(\hT)}\right).
\end{eqnarray*}
Then, to obtain (\ref{ineq1}) for $i=1$, it is enough to show that
\begin{equation}
\label{ultima cota}
\|\d (0,x_2\alpha,x_3\beta)\|_{\Lp(\hT)} \le
C(\|u_1\|_{\W1p(\hT)} + \|\d \u\|_{\Lp(\hT)}).
\end{equation}

For $p_k\in\mathcal P_{k}(\hT)$ we have
\begin{eqnarray*}
0&=&\int_{\widehat T} (0,v_2-x_2\alpha,v_3-x_3\beta)\cdot\nabla
p_{k}\\
&=&-\int_{\widehat T} \d (0,v_2-x_2\alpha,v_3-x_3\beta)\,p_k +
\int_{\partial\widehat
T}\left[(v_2-x_2\alpha)n_2+(v_3-x_3\beta)n_3\right]\,p_k.
\end{eqnarray*}
Now we take $p_k(x_1,x_2,x_3)=(1-x_1-x_2-x_3)p_{k-1}$ with
$p_{k-1}\in\mathcal P_{k-1}(\hT)$. Then, since $p_k=0$ on
$\widehat F_4$, $(v_2-x_2\alpha)n_2=0$ on $\partial\widehat T\setminus F_4$
and $(v_3-x_3\beta)n_3=0$ on $\partial\widehat T\setminus F_4$, it follows
that, in the last equation, the boundary integral vanishes. Then,
\[
\int_{\widehat T} (1-x_1-x_2-x_3)\,\d
(0,v_2-x_2\alpha,v_3-x_3\beta)\,p_{k-1} =0.
\]
That is, for all $p_{k-1}\in\mathcal P_{k-1}(\widehat T)$,
$$
\int_{\widehat T} (1-x_1-x_2-x_3)\,\d(0,x_2\alpha,x_3\beta)\,p_{k-1}=\int_{\widehat T}
(1-x_1-x_2-x_3)\,\d (0,v_2,v_3)\,p_{k-1}.
$$
Therefore, taking $p_{k-1}=\d(0,x_2\alpha,x_3\beta)$ and applying the H\"older
inequality we obtain
$$
\int_{\widehat T} (1-x_1-x_2-x_3)\,|\d(0,x_2\alpha,x_3\beta)|^2
\le C\|\d (0,v_2,v_3)\|_{L^p(\widehat T)}\,\|\d(0,x_2\alpha,x_3\beta)\|_{L^{p'}(\widehat T)}.
$$
But, since all the norms on $\mathcal P_{k-1}(\widehat T)$ are equivalent
we conclude that,
\begin{equation}
\label{casi esta}
\|\d(0,x_2\alpha,x_3\beta)\|_{L^p(\widehat T)}
\le C\|\d (0,v_2,v_3)\|_{L^p(\widehat T)}.
\end{equation}
Now, observe that $\d (0,v_2,v_3)=\d (0,u_2,u_3)$ and
$$
\|\d (0,u_2,u_3)\|_{\Lp(\hT)}
\le C\left(\|\d \u\|_{\Lp(\hT)} +
\|u_1\|_{\W1p(\hT)}\right)
$$
then, (\ref{ultima cota}) follows from (\ref{casi esta}).

Clearly, the estimates for $\widehat\Pi_{k,2}\u$ and
$\widehat\Pi_{k,3}\u$ can be proved analogously.
\end{proof}

From the previous lemma and a change of variables we obtain
estimates for elements in $\mathcal F_1$.

The Raviart-Thomas operators on the elements $\hT$ and $\widetilde T$ will
be denoted by $\widehat\Pi_k$ and $\widetilde\Pi_k$ respectively.
Analogous notations will be used for variables and derivatives
or differential operators on $\hT$ and $\widetilde T$ whenever
needed for clarity.

\begin{proposition}
\label{proprvp} Let $\tT\in\mathcal F_1$ be the element with vertices
at ${\bf 0}$, $h_1{\bf e}_1$,
$h_2{\bf e}_2$ and $h_3{\bf e}_3$, where $h_i>0$.
There exists a constant $C$ depending only on $k$
such that, for $\widetilde\u=(\tilde u_1,\tilde u_2,\tilde u_3)\in \W1p(\widetilde T)^3$
and $i=1,2,3$,
\begin{eqnarray*}
\label{ineq0}
\|\widetilde\Pi_{k,i}\widetilde\u\|_{\Lp(\widetilde T)}&\le&
C\left(\|\tilde u_i\|_{\Lp(\widetilde T)} + \sum_{j=1}^3 h_j
\left\| \frac{\partial \tilde u_i}{\partial \tilde x_j}
\right\|_{\Lp(\widetilde T)} +
h_i\|\widetilde\d\widetilde\u\|_{\Lp(\widetilde T)}\right)
\end{eqnarray*}
\end{proposition}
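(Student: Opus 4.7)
The plan is to transport the reference-element estimate of the preceding lemma to $\widetilde T$ by a diagonal affine change of variables combined with the Piola transform, which is exactly compatible with the Raviart--Thomas interpolation.

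More precisely, I would set $B=\mathrm{diag}(h_1,h_2,h_3)$ and define the affine map $\widetilde{\bf x}=B\widehat{\bf x}$ sending $\hT$ onto $\tT$. To a field $\tu$ on $\tT$ I associate $\widehat\u$ on $\hT$ via the (inverse) Piola transform
\[
\widehat u_i(\widehat{\bf x})=\frac{h_1h_2h_3}{h_i}\,\tilde u_i(B\widehat{\bf x}),\qquad i=1,2,3.
\]
The two key invariances of this transform that I would use are: first, that $\RT_k$ is preserved and the Raviart--Thomas degrees of freedom transform properly, so that $\widehat\Pi_k\widehat\u$ corresponds to $\widetilde\Pi_k\tu$ under the same rule, i.e.\
\[
\widetilde\Pi_{k,i}\tu(\widetilde{\bf x})=\frac{h_i}{h_1h_2h_3}\,\widehat\Pi_{k,i}\widehat\u(\widehat{\bf x});
\]
and second, that the divergence scales as $\widehat{\mathrm{div}}\,\widehat\u=h_1h_2h_3\,\widetilde{\mathrm{div}}\,\tu$.

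Once these identities are written down, the proof reduces to bookkeeping of $L^p$ norms under the change of variables. A direct computation gives
\[
\|\widetilde\Pi_{k,i}\tu\|_{\Lp(\tT)}=\frac{h_i}{(h_1h_2h_3)^{1-1/p}}\,\|\widehat\Pi_{k,i}\widehat\u\|_{\Lp(\hT)},
\]
and analogously
\[
\|\widehat u_i\|_{\Lp(\hT)}=\frac{(h_1h_2h_3)^{1-1/p}}{h_i}\|\tilde u_i\|_{\Lp(\tT)},\quad
\Bigl\|\tfrac{\partial \widehat u_i}{\partial \widehat x_j}\Bigr\|_{\Lp(\hT)}=\frac{(h_1h_2h_3)^{1-1/p}}{h_i}\,h_j\Bigl\|\tfrac{\partial \tilde u_i}{\partial \widetilde x_j}\Bigr\|_{\Lp(\tT)},
\]
together with $\|\widehat{\mathrm{div}}\,\widehat\u\|_{\Lp(\hT)}=(h_1h_2h_3)^{1-1/p}\,\|\widetilde{\mathrm{div}}\,\tu\|_{\Lp(\tT)}$. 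Applying the previous lemma to $\widehat\u$ on $\hT$ and multiplying through by $h_i/(h_1h_2h_3)^{1-1/p}$, the powers of $h_1h_2h_3$ cancel and the three groups of terms on the right-hand side produce exactly $\|\tilde u_i\|_{\Lp(\tT)}$, $\sum_j h_j\|\partial\tilde u_i/\partial\widetilde x_j\|_{\Lp(\tT)}$ and $h_i\|\widetilde{\mathrm{div}}\,\tu\|_{\Lp(\tT)}$, with a constant depending only on $k$.

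There is essentially no obstacle here beyond being careful with the scaling exponents; the substantive work was already done in the previous lemma on $\hT$. The only point worth emphasising is that the factor $h_i$ (rather than some other $h_j$) in front of the divergence term appears because the $i$-th component of the Piola-transformed field is multiplied by $h_i/(h_1h_2h_3)$, and it is this prefactor that survives on the left while the divergence term on the right-hand side of the reference lemma only contributes a single overall factor $(h_1h_2h_3)^{1-1/p}$.
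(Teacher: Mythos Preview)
Your proposal is correct and follows exactly the same approach as the paper: introduce the diagonal map $B=\mathrm{diag}(h_1,h_2,h_3)$, relate $\widetilde\u$ and $\widehat\u$ by the Piola transform, invoke the reference estimate (\ref{ineq1}) on $\hT$, and track the scaling of each term under the change of variables. Your scaling identities are just a slightly more explicit rewriting of the paper's computation (e.g.\ your factor $h_i/(h_1h_2h_3)^{1-1/p}$ is the $p$-th root of the paper's $h_1h_2h_3/h_2^ph_3^p$ for $i=1$), and the final cancellation yielding the $h_i$ in front of the divergence term is handled identically.
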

\begin{proof} Let $\widehat\u\in W^{1,p}(\hT)^3$ defined via the
Piola transform by
$$
\widetilde\u(\widetilde\x)=
\frac1{\det B}B\widehat\u(\widehat\x),\quad\widetilde\x=B\widehat\x,
\quad \mbox{with}\ \
B=\left(%
\begin{array}{ccc}
  h_1 & 0 & 0 \\
  0 & h_2 & 0 \\
  0 & 0 & h_3 \\
\end{array}%
\right)
$$
It is known that (see for example \cite{D3,RT}),
\begin{equation}
\label{Piola1}
\widetilde\Pi_k\widetilde\u(\widetilde\x)=
\frac1{\det B}\,B\,\widehat\Pi_k\widehat\u(\widehat\x),
\end{equation}
and
\begin{equation}
\label{Piola2}
\widetilde\d\widetilde\u(\widetilde\x) =
\frac1{\det B}\widehat\d\widehat\u(\widehat\x).
\end{equation}
Consider for example $i=1$ (the other cases are of course analogous).
Using (\ref{ineq1}) we have
\begin{eqnarray*}
\left\|\widetilde\Pi_{k,1}\widetilde\u\right\|_{L^p(\tT)}^p &=&
\frac{h_1h_2h_3}{h_2^ph_3^p}\left\|\widehat\Pi_{k,1}\widehat\u\right\|_{L^p(\hT)}^p\\
&\le& C\frac{h_1h_2h_3}{h_2^ph_3^p} \left(\|\hat
u_1\|_{W^{1,p}(\hT)}^p +
\|\widehat\d\widehat\u\|_{L^p(\hT)}^p\right)\\
&\le& C\left(\|\tilde u_1\|_{\Lp(\widetilde T)}^p + \sum_{j=1}^3
h_j^p \left\| \frac{\partial \tilde u_1}{\partial \tilde x_j}
\right\|_{\Lp(\widetilde T)}^p +
h_1^p\|\widetilde\d\widetilde\u\|_{\Lp(\widetilde T)}^p\right)
\end{eqnarray*}
as we wanted to show.
\end{proof}

We are finally ready to prove the main theorem of
this section.

\begin{proof}[Proof of Theorem \ref{mainrvp}]
To simplify notation we assume ${\bf p}_0={\bf 0}$. From Theorem
\ref{sobrervp} we know that, if $\tT\in\mathcal F_1$ is the
element with vertices at ${\bf 0}$, $h_1{\bf e}_1$, $h_2{\bf e}_2$
and $h_3{\bf e}_3$, there exists a matrix $M$ such that the
associated linear transformation maps $\tT$ onto $T$. Moreover,
$M{\bf e_i}=\l_i, i=1,2,3$.

Now, given $\u\in\W1p(T)^3$ we define $\widetilde\u\in\W1p(\widetilde T)^3$
via the Piola transform, namely,
\[
\u(\x)=\frac1{\det M}M\widetilde\u(\widetilde\x),\qquad
\x=M\widetilde\x\, .
\]
Using Proposition \ref{proprvp} after the change of variables
$\x\mapsto\widetilde\x$ we have
\[
\|\Pi_k\u\|_{L^p(T)}^p \le C \frac{\|M\|^p}{(\det M)^{p-1}}
\left(
\|\widetilde\u\|_{L^p(\tT)}^p + \sum_{j=1}^3 h_j^p \left\|
\frac{\partial\widetilde\u}{\partial\tilde x_j}\right\|_{L^p(\tT)}^p +
h_{\tT}^p\|\widetilde\d\widetilde\u\|_{L^p(\tT)}^p\right)
\]
where $h_{\tT}$ is the diameter of $\tT$ and
$\frac{\partial\widetilde\u}{\partial\tilde x_j}$ denotes the
vector $\left(\frac{\partial\tilde u_1}{\partial\tilde x_j},
\frac{\partial\tilde u_2}{\partial\tilde x_j},
\frac{\partial\tilde u_3}{\partial\tilde x_j}\right)^t$. But,
\begin{equation}
\label{propiedadpiola}
\frac{\partial\widetilde\u}{\partial\tilde x_j} = \det M\, M^{-1}
\frac{\partial\u}{\partial \l_j},\qquad
\d\u(\x)=\frac1{\det M}\widetilde\d\widetilde\u(\widetilde\x),
\end{equation}
and $h_{\tT}\le \|M^{-1}\| h_T$. Therefore we arrive at
\[
\|\Pi_k\u\|_{L^p(T)}^p \le C \|M\|^p\|M^{-1}\|^p \left(
\|\u\|_{L^p(T)}^p + \sum_{j=1}^3 h_j^p \left\|
\frac{\partial\u}{\partial \l_j}\right\|_{L^p(T)}^p + h_T^p\|\d\u\|_{L^p(T)}^p\right)
\]
and recalling that $\|M\|,\|M^{-1}\|\le C$ with $C$ depending
only on $\bar c$, we conclude the proof. \end{proof}

\setcounter{equation}{0}
\section{Stability under the maximum angle condition}
\label{section4}

In this section we prove a stability result weaker
than that obtained in the previous section but which is valid
for families of elements satisfying the maximum angle condition.

The estimate obtained here, although uniform in the class of
elements satisfying \textsl{MAC}$(\bar\psi)$, is weaker than the estimate
obtained in Theorem \ref{mainrvp} under the stronger \textsl{RVP}$(\bar c)$ hypothesis.
Indeed, in front of each derivative, it appears the diameter $h_T$
instead of the length of the edge in the direction
of the derivative. However, our result is optimal. In fact,
we will show in the next section that estimates like those in
Theorem \ref{mainrvp} are not valid in general under the maximum
angle condition.

The main result of this section is the following theorem.

\begin{theorem}
\label{mainmac} Let $k\ge0$ and $T$ be a tetrahedron with diameter $h_T$
satisfying \textsl{MAC}$(\bar\psi)$. There exists a constant $C$
depending only on $k$ and $\bar\psi$ such that, for all $\u\in \W1p(T)^3$,
$1\le p\le \infty$,
\begin{equation}
\label{mainmac1}
\|\Pi_k\u\|_{\Lp(T)}\le C\left(\|\u\|_{\Lp(T)}
+ h_T \|\nabla\u\|_{\Lp(T)}\right).
\end{equation}
\end{theorem}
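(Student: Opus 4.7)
The natural approach is to mirror the strategy used in Section~3: reduce to a reference element via Theorem~\ref{mac}, and then establish stability on each reference family with constants independent of the edge lengths $h_1,h_2,h_3$. First I would apply Theorem~\ref{mac} to write $T=F(\widetilde T)$ for some $\widetilde T\in\mathcal F_1\cup\mathcal F_2$ and some affine $F(\widetilde\x)=M\widetilde\x+\mathbf c$ with $\|M\|,\|M^{-1}\|\le C(\bar\psi)$. Transferring to $\widetilde T$ by the Piola transform exactly as in the final computation of the proof of Theorem~\ref{mainrvp} (using $h_{\widetilde T}\le\|M^{-1}\|h_T$ and absorbing the factor $\|M\|^p\|M^{-1}\|^p$ into $C$), the theorem reduces to proving, uniformly in $h_1,h_2,h_3$,
$$
\|\widetilde\Pi_k\widetilde\u\|_{L^p(\widetilde T)}\le C\bigl(\|\widetilde\u\|_{L^p(\widetilde T)}+h_{\widetilde T}\|\nabla\widetilde\u\|_{L^p(\widetilde T)}\bigr)\qquad\forall\widetilde T\in\mathcal F_1\cup\mathcal F_2.
$$

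For $\widetilde T\in\mathcal F_1$ this is an immediate consequence of Proposition~\ref{proprvp}: each of the anisotropic quantities $h_j\|\partial\widetilde u_i/\partial\widetilde x_j\|_{L^p}$ and $h_i\|\widetilde\d\widetilde\u\|_{L^p}$ appearing there is bounded by $h_{\widetilde T}\|\nabla\widetilde\u\|_{L^p(\widetilde T)}$, and summing over components gives the required bound. So the entire content of Section~\ref{section4} lies in establishing the same inequality when $\widetilde T\in\mathcal F_2$.

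For the $\mathcal F_2$ case I would carry out the same three-stage argument that was used on $\widehat T$ and $\mathcal F_1$ in Section~3, but now with the reference element taken to be $\widetilde T$ itself. Two of its faces through the origin (the ones in $\{x_1=0\}$ and $\{x_3=0\}$) are still coordinate-aligned, so the device that made Lemma~\ref{lemma1} work—replacing a vector field by one whose trace on those faces vanishes, then adjusting by a $\mathcal{RT}_k$ summand supported on $x_i\mathcal P_{k-1}$ to kill the interior moments—should still eliminate two of the three ``directions'' of degrees of freedom. What changes is the treatment of the third face through $\mathbf 0$, whose unit normal is the tilted vector $(h_2,-h_1,0)/\sqrt{h_1^2+h_2^2}$; this face couples the first and second components of $\widetilde\u$ in the corresponding moments. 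Because one cannot control $h_2/h_1$ under \textsl{MAC}, one must avoid ever dividing by either edge length in isolation. The right substitute, as suggested by the computations for $\mathcal F_1$, is to work with the ``sheared'' combination $\widetilde u_2-(h_2/h_1)\widetilde u_1$, whose trace on the tilted face has the same structure relative to it as $u_2$ did to $\{x_2=0\}$ on $\widehat T$; the factor $h_2/h_1$ then only appears multiplying derivatives that carry an extra compensating $h_1$ through the chain rule, so that the final right-hand side involves only $h_{\widetilde T}\|\nabla\widetilde\u\|_{L^p}$.

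\emph{The hard part} will be making the last step rigorous while keeping the constant uniform in $h_1/h_2$. In particular, the analog of the ``kill the interior moments'' polynomial $\alpha$, which in Lemma~\ref{ineq1} is controlled by the $L^p$-equivalence of norms on a fixed $\mathcal P_{k-1}(\widehat T)$, must now be controlled on $\mathcal P_{k-1}(\widetilde T)$ for $\widetilde T\in\mathcal F_2$; here the relevant norm equivalence is \emph{not} uniform in $h_1,h_2,h_3$, and the estimate must be extracted through an anisotropic change of variables so that the $h_2/h_1$ factors introduced by the tilted face and the one arising from inverting the mass matrix cancel cleanly. Once such a reference estimate is available, the scaling to a general $T$ is identical to the end of the proof of Theorem~\ref{mainrvp}.
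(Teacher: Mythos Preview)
Your reduction via Theorem~\ref{mac} and the Piola transform, and your handling of the $\mathcal F_1$ case through Proposition~\ref{proprvp}, match the paper exactly. The divergence is in the $\mathcal F_2$ case, where you make the problem much harder than it is.

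You propose to carry out the Lemma~\ref{lemma1}--type argument directly on the anisotropic $\widetilde T\in\mathcal F_2$, and then worry about the tilted face with normal $(h_2,-h_1,0)/\sqrt{h_1^2+h_2^2}$, sheared combinations $\widetilde u_2-(h_2/h_1)\widetilde u_1$, and non-uniform norm equivalences on $\mathcal P_{k-1}(\widetilde T)$. The paper avoids all of this by the observation you overlooked: the map from the \emph{unit} $\mathcal F_2$ element $\widehat T$ (vertices $\mathbf 0$, $\mathbf e_1+\mathbf e_2$, $\mathbf e_2$, $\mathbf e_3$) to a general $\widetilde T\in\mathcal F_2$ is the \emph{same diagonal} matrix $B=\mathrm{diag}(h_1,h_2,h_3)$ used for $\mathcal F_1$, since $\mathbf e_1+\mathbf e_2\mapsto h_1\mathbf e_1+h_2\mathbf e_2$. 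So the paper simply repeats the Section~3 program on this fixed $\widehat T$ (Lemma~\ref{lemma31} and the lemma following it), where every norm equivalence on $\mathcal P_{k-1}(\widehat T)$ has constants depending only on $k$, and then scales by $B$ in Proposition~\ref{propmac}. The $h_2/h_1$ cancellations you anticipate never arise.

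There is also a substantive point your plan misses. On the fixed $\widehat T\in\mathcal F_2$ the direct analogue of (\ref{ineq1}) \emph{fails} for the second component: one cannot bound $\|\widehat\Pi_{k,2}\u\|$ by $\|u_2\|_{W^{1,p}}+\|\d\u\|_{L^p}$ (Section~5 gives the counterexample). The correct reference estimate is $\|\widehat\Pi_{k,2}\u\|\le C(\|u_2\|_{W^{1,p}}+\|\partial u_1/\partial x_1\|+\|\partial u_3/\partial x_3\|)$, and it is precisely this form---each cross term a diagonal derivative $\partial u_j/\partial x_j$---that scales under $B$ to $h_i\|\partial\tilde u_j/\partial\tilde x_j\|\le h_{\widetilde T}\|\nabla\widetilde\u\|$. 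A plan modeled too closely on (\ref{ineq1}) would target the wrong reference inequality.
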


\bigskip

The steps to prove this theorem are similar to those followed in
Section 3. Now our reference element $\widehat T$ is the
tetrahedron with vertices at ${\bf 0}$, ${\bf e}_1+ {\bf e}_2$,
${\bf e}_2$ and ${\bf e}_3$. For $\n_1=(1,0,0),
\n_2=\frac1{\sqrt{2}}(1,-1,0), \n_3=(0,0,1)$ and
$\n_4=\frac1{\sqrt{2}}(0,1,1)$ we denote with $\widehat F_i$ the
face of $\widehat T$ normal to $\n_i$ and with $\overline{F}_2$
the projection of $\widehat F_2$ onto the plane given by $x_2=0$.

\begin{lemma}
\label{lemma31} Let $f\in L^p(\widehat F_1)$, $g\in L^p(\overline
F_2)$, and $h\in L^p(\widehat F_3)$. If
$$
\u(x_1,x_2,x_3)=(f(x_2,x_3),0,0),
\quad\vv(x_1,x_2,x_3)=(0,g(x_1,x_3),0),
$$
and
$$
\w(x_1,x_2,x_3)=(0,0,h(x_1,x_2))
$$
then, their Raviart-Thomas interpolations are of the same form,
namely, there exist $q_i\in\mathcal{P}_k(\widehat F_i)$, $i=1,3$,
and $q_2\in\mathcal{P}_k(\overline F_2)$ such that
$$
\widehat\Pi_k\u=(q_1(x_2,x_3),0,0),\quad
\widehat\Pi_k\vv=(0,q_2(x_1,x_3),0),
$$
and
$$
\widehat\Pi_k{\bf w}=(0,0,q_3(x_1,x_2)).
$$
\end{lemma}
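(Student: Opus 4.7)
The plan is to mimic the argument of Lemma \ref{lemma1} on the new reference element $\widehat T$, whose only non-axis-aligned face $\widehat F_2\subset\{x_1=x_2\}$ is the single new geometric feature. In each of the three cases the input vector field has vanishing divergence, so a standard property of the Raviart--Thomas interpolant forces $\widehat\Pi_k$ applied to it to lie in $\mathcal P_k(\widehat T)^3$; the task then reduces to showing that two of its three polynomial components are identically zero, after which $\d\widehat\Pi_k=0$ yields independence of the surviving component on the missing variable.

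For each input one first identifies the faces on which the normal trace vanishes and the internal moment equations whose right-hand side is zero. For $\u=(f(x_2,x_3),0,0)$ these are $\widehat F_3,\widehat F_4$ and the second and third internal moments; for $\vv=(0,g(x_1,x_3),0)$ they are $\widehat F_1,\widehat F_3$ and the first and third internal moments; for $\w=(0,0,h(x_1,x_2))$ they are $\widehat F_1,\widehat F_2$ and the first and second internal moments. Each vanishing face moment forces the appropriate linear combination of components of $\widehat\Pi_k$ to vanish on that face and hence to factorise as $\ell\,r$, with $\ell$ the linear form defining the face and $r\in\mathcal P_{k-1}(\widehat T)$.

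The crucial new ingredient is positivity: each of the linear forms appearing ($x_3$, $1-x_2-x_3$, $x_1$, and the oblique $x_2-x_1$) is non-negative on $\widehat T$ and vanishes only on its own face, the last one by a direct check on the four vertices. Substituting a factorised component into its internal-moment identity and testing against $r$ yields $\int_{\widehat T}\ell\,r^{2}=0$ with a non-negative integrand, which forces $r\equiv 0$ and hence kills the component. When two face conditions are coupled the order matters: in the first case one first kills $\widehat\Pi_{k,3}\u$ from $\widehat F_3$ and the positivity of $x_3$, then reads off $\widehat\Pi_{k,2}\u|_{\widehat F_4}=0$ and applies positivity of $1-x_2-x_3$; in the third case, where the $\widehat F_2$-moment couples the first two components, one first kills $\widehat\Pi_{k,1}\w$ via $\widehat F_1$ and positivity of $x_1$, then decouples $\widehat F_2$ to obtain $\widehat\Pi_{k,2}\w|_{\widehat F_2}=0$, and finally applies positivity of $x_2-x_1$. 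I expect this last step -- exploiting the oblique face through positivity after a careful sequencing -- to be the main technical subtlety of the proof; the remainder is a routine transcription of the argument of Lemma \ref{lemma1}.
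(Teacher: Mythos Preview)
Your proposal is correct and follows essentially the same approach as the paper's proof: use $\d\widehat\Pi_k=0$ to land in $\mathcal P_k(\widehat T)^3$, factor the relevant components through the linear form of a face on which the normal trace vanishes, and kill the $\mathcal P_{k-1}$ factor via the internal moment identity and the non-negativity of that linear form on $\widehat T$. The paper only writes out the case of $\u$ explicitly (treating $\widehat\Pi_{k,2}\u+\widehat\Pi_{k,3}\u$ as a sum rather than sequencing as you do, a cosmetic difference) and declares the other two analogous, so your more detailed case-by-case sketch, including the oblique face $\{x_2=x_1\}$, is if anything more complete.
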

\begin{proof} The proof is similar to that of Lemma \ref{lemma1}.
We will prove the first equality, the other two follow in
an analogous way.

First, we have that $\d\widehat\Pi_k\u=0$ and therefore
$\widehat\Pi_k\u\in \mathcal{P}_k(\widehat T)^3$.
Then, proceeding exactly as in the proof of Lemma \ref{lemma1},
we obtain that $\widehat\Pi_{k,3}\u=0$ in $\widehat T$.
Analogously, using now (\ref{RTcaras}) for $i=4$ we have
$(\widehat\Pi_{k,2}\u + \widehat\Pi_{k,3}\u)|_{\widehat F_4}=0$,
and so
$$
\widehat\Pi_{k,2}\u + \widehat\Pi_{k,3}\u=(1-x_2-x_3)r
$$
for some $r\in\mathcal{P}_{k-1}(\widehat T)$. Consequently, using
now (\ref{RTadentro}) and that $u_2=u_3=0$,
we obtain $\widehat\Pi_{k,2}\u + \widehat\Pi_{k,3}\u=0$ in $\widehat T$.
Then, since we already know that $\widehat\Pi_{k,3}\u=0$,
we conclude that  $\widehat\Pi_{k,2}\u=0$ in $\widehat T$.

Therefore, $\widehat\Pi_k\u=(q,0,0)$ for some $q\in\mathcal{P}_k(\widehat T)$
but, since $\d\widehat\Pi_k\u=0$, it follows that $\widehat\Pi_{k,1}$ is independent
of $x_1$.
\end{proof}

\begin{lemma}
There exists a constant $C_1$ depending only on $k$ such that, for all
$\u=(u_1,u_2,u_3)\in \W1p(\hT)^3$,
\begin{eqnarray}\label{ineq1.5}
\|\widehat\Pi_{k,1}\u\|_{L^2(\hT)}&\le&
C_1\left(\|u_1\|_{\W1p(\hT)} + \|\d\u\|_{\Lp(\hT)}\right)\\
\label{ineq2} \|\widehat\Pi_{k,2}\u\|_{L^2(\hT)}&\le&
C_1\left(\|u_2\|_{\W1p(\hT)} + \left\|\frac{\partial u_1}{\partial x_1}\right\|_{\Lp(\hT)}
+ \left\|\frac{\partial u_3}{\partial
x_3}\right\|_{\Lp(\hT)}\right)\\
\label{ineq3}\|\widehat\Pi_{k,3}\u\|_{\Lp(\hT)}&\le&
C_1\left(\|u_3\|_{\W1p(\hT)} + \|\d\u\|_{\Lp(\hT)}\right).
\end{eqnarray}
In particular, for $i=1,2,3$,
\begin{equation}
\label{ineq3.5}
\|\widehat\Pi_{k,i}\u\|_{L^2(\hT)}
\le C_2\left(\|u_i\|_{\W1p(\hT)}
+ \sum_{j=1\atop j\neq i}^3\left\|\frac{\partial u_j}{\partial x_j}\right\|_{\Lp(\hT)}\right)
\end{equation}
for another constant $C_2$ which depends only on $k$.
\end{lemma}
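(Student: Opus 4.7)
The strategy mirrors the proof of estimate (\ref{ineq1}) in Section~3, with Lemma~\ref{lemma31} replacing Lemma~\ref{lemma1} and the subtractions adapted to the new reference element $\hT\in\mathcal F_2$. The bound (\ref{ineq3.5}) then follows at once from (\ref{ineq1.5})--(\ref{ineq3}) by noting that $\|\d\u\|_{\Lp(\hT)}\le\sum_j\|\partial u_j/\partial x_j\|_{\Lp(\hT)}$ and that $\|u_i\|_{\W1p(\hT)}$ already dominates $\|\partial u_i/\partial x_i\|_{\Lp(\hT)}$.

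For (\ref{ineq1.5}), I would set $\vv=(u_1,\,u_2-u_2(x_1,x_1,x_3),\,u_3-u_3(x_1,x_2,0))$, using the trace of $u_2$ on the tilted face $\hat F_2=\{x_1=x_2\}$, extended as a function of $(x_1,x_3)\in\overline F_2$ independent of $x_2$ (the form required by Lemma~\ref{lemma31}), and the trace of $u_3$ on $\hat F_3$. Lemma~\ref{lemma31} yields $\widehat\Pi_{k,1}\vv=\widehat\Pi_{k,1}\u$. Because $\hat F_2$ is tilted, the correction $(0,x_2\alpha,x_3\beta)$ of Section~3 must be replaced by $(0,(x_2-x_1)\alpha,x_3\beta)$, with $\alpha,\beta\in\mathcal P_{k-1}(\hT)$ chosen by
$$\int_{\hT}(v_2-(x_2-x_1)\alpha)p_{k-1}=0=\int_{\hT}(v_3-x_3\beta)p_{k-1},\qquad\forall\,p_{k-1}\in\mathcal P_{k-1}(\hT).$$
Setting $\w=(u_1,v_2-(x_2-x_1)\alpha,v_3-x_3\beta)$ one has $\widehat\Pi_{k,1}\w=\widehat\Pi_{k,1}\u$ since the correction lies in $\mathcal P_k(\hT)^3\subset\RT_k$, and by construction $w_2=0$ on $\hat F_2$ and $w_3=0$ on $\hat F_3$. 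All degrees of freedom of $\widehat\Pi_k\w$ are then either zero or depend only on $u_1$, except the face moment on $\hat F_4$, which I would isolate by a Green identity analogous to (\ref{eq12}); together with finite-dimensional norm equivalence in $\RT_k(\hT)$ this gives $\|\widehat\Pi_{k,1}\u\|_{\Lp(\hT)}\le C(\|u_1\|_{\W1p(\hT)}+\|\d\w\|_{\Lp(\hT)})$.

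To close (\ref{ineq1.5}) I would bound $\|\d(0,(x_2-x_1)\alpha,x_3\beta)\|_{\Lp(\hT)}$ exactly as in (\ref{ultima cota}): testing the relation $\int_{\hT}(0,v_2-(x_2-x_1)\alpha,v_3-x_3\beta)\cdot\nabla p_k=0$ against $p_k=(1-x_2-x_3)p_{k-1}$, every boundary contribution dies because $p_k=0$ on $\hat F_4$, $v_2-(x_2-x_1)\alpha=0$ on $\hat F_2$ (this is precisely why the choice $(x_2-x_1)\alpha$ was essential), $v_3-x_3\beta=0$ on $\hat F_3$, and $n_2=n_3=0$ on $\hat F_1$; polynomial norm equivalence together with $\d(0,v_2,v_3)=\partial_2 u_2+\partial_3 u_3=\d\u-\partial_1 u_1$ then concludes. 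Estimate (\ref{ineq3}) is entirely symmetric in the roles of $x_1$ and $x_3$.

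For the more delicate (\ref{ineq2}), I would take $\vv=(u_1-u_1(0,x_2,x_3),\,u_2,\,u_3-u_3(x_1,x_2,0))$; both subtracted fields have Raviart-Thomas interpolations with vanishing second component by Lemma~\ref{lemma31}, so $\widehat\Pi_{k,2}\vv=\widehat\Pi_{k,2}\u$, and now $v_1=0$ on $\hat F_1$ and $v_3=0$ on $\hat F_3$. Correct by $(x_1\alpha,0,x_3\beta)\in\RT_k$ with $\alpha,\beta\in\mathcal P_{k-1}$ killing the interior moments of the first and third components; running the Green identity as before, the bound on the correction now involves $\d(v_1,0,v_3)=\partial_1 u_1+\partial_3 u_3$ in place of the full divergence, which is precisely the source of the sharper right-hand side of (\ref{ineq2}) with no $\|\partial u_2/\partial x_2\|_{\Lp(\hT)}$ contribution. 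The main technical hurdle throughout is to select the $\RT_k$-correction and the test polynomial so that all unwanted boundary integrals in the key Green identities vanish; once this geometric adjustment is made, the argument of Section~3 transfers with only cosmetic modifications.
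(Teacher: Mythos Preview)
Your proposal is correct and follows essentially the same route as the paper. The one place where the paper's argument is more elaborate than your sketch indicates is (\ref{ineq2}): because the outward normal on $\hat F_2$ has a nonzero second component, \emph{both} $\hat F_2$ and $\hat F_4$ carry face moments involving $w_2$, so ``running the Green identity as before'' must in fact be done twice---once for $\d(0,w_2,w_3)$ to control the $\hat F_4$ moment and once for $\d(w_1,w_2,0)$ to control the $\hat F_2$ moment---and the correction terms $\partial_{x_1}(x_1\alpha)$ and $\partial_{x_3}(x_3\beta)$ are then bounded separately (each by a one-variable integration by parts with its own test polynomial) rather than through a single divergence $\d(v_1,0,v_3)$.
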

\begin{proof}
Let
$\vv=(u_1,u_2-u_2(x_1,x_1,x_3),u_3-u_3(x_1,x_2,0))$ and
$\alpha,\beta\in \mathcal P_{k-1}(\widehat T)$ such that
\begin{equation}
\label{int0}
\int_{\widehat T}(v_2-(x_1-x_2)\alpha)p_{k-1} =0\quad
\mbox{and}\quad
\int_{\widehat T}(v_3-x_3\beta)p_{k-1} =0\qquad \forall p_{k-1}\in
\mathcal P_{k-1}(\widehat T),
\end{equation}
and define
\[\w=(u_1,u_2-u_2(x_1,x_1,x_3)-(x_1-x_2)\alpha,u_3-u_3(x_1,x_2,0)-x_3\beta).\]
Then, since
$(0,(x_1-x_2)\alpha,x_3\beta)\in\RT_k$, it follows from Lemma \ref{lemma31}
that
$$
\widehat\Pi_{k,1}\u=\widehat\Pi_{k,1}\w.
$$
Now, taking into account the definition of $\w$ and (\ref{int0}), we have that
$\widehat\Pi_k\w$ is defined by
\begin{eqnarray}\nonumber
\int_{\hT}\widehat\Pi_{k,1}\w\, p_{k-1} & = & \int_{\hT}w_1
\,p_{k-1} \qquad \forall p_{k-1}\in\mathcal P_{k-1}(\widehat
T)\\
\nonumber \int_{\widehat T}\widehat\Pi_{k,2}\w\, p_{k-1} & = & 0
\qquad \forall p_{k-1}\in\mathcal P_{k-1}(\widehat
T)\\
\label{eq15} \int_{\widehat T}\widehat\Pi_{k,3}\w\, p_{k-1} & = &
0 \qquad \forall p_{k-1}\in\mathcal P_{k-1}(\hT)\\\nonumber
\int_{\widehat F_1} \widehat\Pi_{k,1}\w\, p_k &=& \int_{\widehat
F_1}w_1\,p_k \qquad \forall p_{k}\in\mathcal P_{k}(\widehat
F_1)\\\nonumber \int_{\widehat F_2}
(\widehat\Pi_{k,1}\w-\widehat\Pi_{k,2}\w)\, p_k &=&\int_{\widehat
F_2}w_1\,p_k \qquad \forall p_{k}\in\mathcal P_{k}(\widehat
F_2)\\\nonumber \int_{\widehat F_3} \widehat\Pi_{k,3}\w\, p_k &=&
0 \qquad \forall p_{k}\in\mathcal P_{k}(\widehat F_3)\\\nonumber
\int_{\widehat F_4} (\widehat\Pi_{k,2}\w + \widehat\Pi_{k,3}\w)\,
p_k &=& \int_{\widehat F_4}(w_2+w_3)\,p_k \qquad \forall
p_{k}\in\mathcal P_{k}(\widehat F_4).
\end{eqnarray}
But, using again (\ref{int0}) we have, for $p_k\in\mathcal P_{k}(\widehat T)$,
\begin{eqnarray}
\int_{\widehat T}\d(0,w_2,w_3)p_k & = & -\int_{\widehat T}
(0,w_2,w_3)\cdot\nabla p_k  \nonumber\\
& + & \int_{\partial\widehat T\setminus\widehat F_4}
(w_2n_2+w_3n_3)p_k + \frac1{\sqrt{2}}\int_{\widehat F_4}(w_2+w_3)p_k\nonumber\\
&=&\frac1{\sqrt{2}}\int_{\widehat F_4}(w_2+w_3)p_k.\label{eq16}
\end{eqnarray}
Then, it follows from (\ref{eq15}) and (\ref{eq16}) that
\begin{eqnarray*}
\|\widehat\Pi_{1,k}\w\|_{\Lp(\hT)} &\le& C\left(
\|w_1\|_{\W1p(\hT)} +
\|\d(0,w_2,w_3)\|_{\Lp(\hT)}\right)\\
&\le & C \left( \|u_1\|_{\W1p(\hT)} + \|\d\u\|_{\Lp(\hT)}
+ \|\d(0,(x_1-x_2)\alpha,x_3\beta)\|_{\Lp(\hT)}\right).
\end{eqnarray*}
Therefore, to conclude the proof of (\ref{ineq1.5}) it is enough to show that
\begin{equation}
\label{eq17}
\|\d(0,(x_1-x_2)\alpha,x_3\beta)\|_{\Lp(\hT)} \le C\,(
\|u_1\|_{\W1p(\hT)} + \|\d\u\|_{\Lp(\hT)}).
\end{equation}
But, for all $p_k\in\mathcal P_{k}(\hT)$, we have
$$
0=\int_{\hT} (0,w_2,w_3)\cdot \nabla p_k
= -\int_{\hT}\d(0,w_2,w_3) + \int_{\partial \hT}
(w_2n_2+w_3n_3)p_k.
$$
Now, taking $p_k=(1-x_2-x_3)p_{k-1}$ with $p_{k-1}\in\mathcal P_{k-1}(\widehat T)$
the boundary integral in the last
equation vanishes, and therefore we obtain
$$
\int_{\hT} (1-x_2-x_3)\d(0,(x_1-x_2)\alpha,x_3\beta)p_{k-1}
=\int_{\hT} (1-x_2-x_3)\d(0,u_2,u_3)p_{k-1}.
$$
Then, (\ref{eq17}) can be obtained with an argument like that used for
(\ref{ultima cota}).
Clearly, the proof of inequality (\ref{ineq3})
is analogous to that of (\ref{ineq1.5}).

Now, to prove (\ref{ineq2}), take
$\vv=(u_1-u_1(0,x_2,x_3),u_2,u_3-u_3(x_1,x_2,0))$,
$\alpha,\beta\in \mathcal P_{k-1}(\widehat T)$ such that
\[
\int_{\hT}(v_1-x_1\alpha)p_{k-1} =0 \quad
\mbox{and}\quad
\int_{\hT}(v_3-x_3\beta)p_{k-1} =0\quad \forall p_{k-1}\in
\mathcal P_{k-1}(\hT),
\]
and define
$$
\w=(v_1-x_1\alpha,v_2,v_3-x_3\beta).
$$
Using again Lemma \ref{lemma31} and that
$(x_1\alpha,0,x_3\beta)\in\RT_k$ we obtain
\[
\widehat\Pi_{k,2}\u=\widehat\Pi_{k,2}\w.
\]
In this case, it follows from the definition of $\w$ that
$\Pi_k\w$ is defined by
\begin{eqnarray}\nonumber
\int_{\widehat T}\widehat\Pi_{k,1}\w\, p_{k-1} & = & 0 \qquad
\forall p_{k-1}\in\mathcal P_{k-1}(\widehat
T)\\
\nonumber \int_{\widehat T}\widehat\Pi_{k,2}\w\, p_{k-1} & = &
\int_{\widehat T} w_2\,p_{k-1} \qquad \forall p_{k-1}\in\mathcal
P_{k-1}(\widehat
T)\\
\label{eq20} \int_{\widehat T}\widehat\Pi_{k,3}\w\, p_{k-1} & = &
0 \qquad \forall p_{k-1}\in\mathcal P_{k-1}(\widehat T)\\\nonumber
\int_{F_1} \widehat\Pi_{k,1}\w\, p_k &=& 0 \qquad \forall
p_{k}\in\mathcal P_{k}(\widehat F_1)\\\nonumber \int_{\widehat
F_2} (\widehat\Pi_{k,1}\w-\widehat\Pi_{k,2}\w)\, p_k
&=&\int_{F_2}(w_1-w_2)\,p_k \qquad \forall p_{k}\in\mathcal
P_{k}(\widehat F_2)\\\nonumber \int_{\widehat F_3} \Pi_{k,3}\w\,
p_k &=& 0 \qquad \forall p_{k}\in\mathcal P_{k}(\widehat
F_3)\\\nonumber \int_{\widehat F_4} (\widehat\Pi_{k,2}\w +
\widehat\Pi_{k,3}\w)\, p_k &=& \int_{\widehat F_4}(w_2+w_3)\,p_k
\qquad \forall p_{k}\in\mathcal P_{k}(\widehat F_4).
\end{eqnarray}
But, it is easy to check by integration by parts that,
for all $p_k\in\mathcal P_{k}(\widehat T)$,
\begin{equation}
\int_{\widehat T}\d(0,w_2,w_3)p_k
= - \int_{\widehat T} w_2\frac{\partial p_k}{\partial x_2}-
\frac1{\sqrt{2}}\int_{\widehat F_2}w_2\,p_k +
\frac1{\sqrt{2}}\int_{\widehat F_4}(w_2+w_3)p_k
\label{eq18}
\end{equation}
and
\begin{equation}
\int_{\widehat T}\d(w_1,w_2,0)p_k
= - \int_{\widehat T} w_2\frac{\partial p_k}{\partial x_2}+
\frac1{\sqrt{2}}\int_{\widehat F_4}w_2\,p_k +
\frac1{\sqrt{2}}\int_{\widehat F_2}(w_1-w_2)p_k\label{eq19}
\end{equation}
Now, it follows from (\ref{eq20}), (\ref{eq18}) and (\ref{eq19}) that
$$
\|\widehat\Pi_{2,k}\w\|_{\Lp(\hT)}
\le C\left( \|w_2\|_{\W1p(\hT)}
+ \|\d(0,w_2,w_3)\|_{\Lp(\hT)} + \|\d(w_1,w_2,0)\|_{\Lp(\hT)}\right)
$$
and therefore, using the definition of $\w$, we obtain
\begin{eqnarray*}
\|\widehat\Pi_{2,k}\w\|_{\Lp(\hT)}
&\le& C \left( \|u_2\|_{\W1p(\hT)}
+ \left\|\frac{\partial u_1}{\partial x_1}\right\|_{\Lp(\hT)}
+\right.\\&&\left.
\left\|\frac{\partial u_3}{\partial x_3}\right\|_{\Lp(\hT)}
+ \left\|\frac{\partial (x_1\alpha)}{\partial x_1}\right\|_{\Lp(\hT)}
+ \left\|\frac{\partial(x_3\beta)}{\partial x_3}\right\|_{\Lp(\hT)}\right).
\end{eqnarray*}
Then, to conclude the proof of (\ref{ineq2}) we have to estimate
the last two terms in the above inequality. From the definition of
$w_3$ we have, for all $p_k\in\mathcal P_k(\hT)$,
$$
0=\int_{\widehat T} w_3\frac{\partial p_k}{\partial x_3}
= - \int_{\widehat T} \frac{\partial w_3}{\partial x_3}p_k
+ \int_{\partial\widehat T} w_3n_3p_k,
$$
but, if we take $p_k=(1-x_2-x_3)p_{k-1}$ with $p_{k-1}\in\mathcal P_{k-1}(\widehat T)$
the boundary integral in the
last equation vanishes, and therefore
$$
\int_{\widehat T} (1-x_2-x_3)\frac{\partial (x_3\beta)}{\partial
x_3}p_{k-1} = \int_{\widehat T} (1-x_2-x_3)\frac{\partial
u_3}{\partial x_3}p_{k-1} \qquad \forall p_{k-1}\in\mathcal
P_{k-1}(\widehat T),
$$
from which we obtain
\[
\left\|\frac{\partial (x_3\beta)}{\partial x_3}\right\|_{\Lp(\hT)}
\le C\,\left\|\frac{\partial u_3}{\partial
x_3}\right\|_{\Lp(\hT)}.
\]
In a similar way we can prove
\[
\left\|\frac{\partial (x_1\alpha)}{\partial
x_1}\right\|_{\Lp(\hT)} \le C\,\left\|\frac{\partial u_1}{\partial
x_1}\right\|_{\Lp(\hT)}
\]
and so (\ref{ineq2}) is proved.\end{proof}

Proceeding as in the previous section we obtain now
estimates for elements in~$\mathcal F_2$.

\begin{proposition}
\label{propmac}
Let $\tT\in\mathcal F_2$ be the element with vertices
at  ${\bf 0}$, $h_1{\bf e}_1+ h_2 {\bf e}_2$,
$h_2{\bf e}_2$ and $h_3{\bf e}_3$, where $h_i>0$.
There exists a constant $C$ depending only on $k$
such that, for $\widetilde\u=(\tilde u_1,\tilde u_2,\tilde u_3)\in \W1p(\widetilde T)^3$
and $i=1,2,3$,

\begin{equation}
\label{ineq4}
\|\widetilde\Pi_{k,i}\widetilde\u\|_{\Lp(\widetilde T)}
\le C\left(\|\tilde u_i\|_{\Lp(\widetilde T)}
+ \sum_{j=1}^3 h_j\left\| \frac{\partial \tilde u_i}{\partial \tilde x_j}
\right\|_{\Lp(\widetilde T)}
+ h_i \sum_{j=1\atop j\neq i}^3  \left\|\frac{\partial \tilde u_j}{\partial \tilde x_j}
\right\|_{\Lp(\widetilde T)}\right)
\end{equation}
\end{proposition}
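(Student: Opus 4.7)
\medskip

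\noindent\textbf{Proof plan.}\; The plan is to mimic the proof of Proposition~\ref{proprvp} step for step, with the reference-element input (\ref{ineq1}) replaced by the combined \textsl{MAC}-type stability estimate (\ref{ineq3.5}) proved in the preceding lemma. The crucial observation is that, even though the reference tetrahedron $\widehat T$ of Section~\ref{section4} is the skewed one with vertices $\mathbf 0,\mathbf e_1+\mathbf e_2,\mathbf e_2,\mathbf e_3$, the purely diagonal affine map $\widetilde\x=B\widehat\x$ with $B=\mathrm{diag}(h_1,h_2,h_3)$ still carries $\widehat T$ bijectively onto $\widetilde T\in\mathcal F_2$: the non-axis-aligned vertex $\mathbf e_1+\mathbf e_2$ is sent to $h_1\mathbf e_1+h_2\mathbf e_2$, which is exactly the corresponding vertex of $\widetilde T$. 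Hence I can reuse the diagonal Piola transform and the covariance identities (\ref{Piola1})--(\ref{Piola2}) from Proposition~\ref{proprvp} verbatim.

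Concretely, I would define $\widehat\u\in W^{1,p}(\widehat T)^3$ by the Piola transform associated with this $B$. Because $B$ is diagonal, component $i$ of $\widehat\u$ depends only on $\widetilde u_i$: one has $\widehat u_i(\widehat\x)=\frac{\det B}{h_i}\widetilde u_i(\widetilde\x)$, and each partial derivative transforms as $\partial_{\widehat x_j}\widehat u_i=\frac{\det B}{h_i}\,h_j\,\partial_{\widetilde x_j}\widetilde u_i$. I would then apply (\ref{ineq3.5}) to $\widehat\u$ for the chosen index $i$, push every $L^p(\widehat T)$-norm on the right back to $L^p(\widetilde T)$ via $d\widetilde\x=(\det B)\,d\widehat\x$, and divide through by the common prefactor $h_i/(\det B)^{(p-1)/p}$ coming out of the left-hand side. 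The output is exactly (\ref{ineq4}).

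The only substantive point, and really the sole ``obstacle'' in the argument, is the bookkeeping that produces the asymmetric weights appearing in (\ref{ineq4}). The right-hand terms $\|\partial_{\widehat x_j}\widehat u_i\|_{L^p(\widehat T)}$ pick up a chain-rule factor $h_j$ and a Piola factor $1/h_i$; after cancellation with the prefactor this leaves the weight $h_j$ in front of $\partial_{\widetilde x_j}\widetilde u_i$, as expected. The cross terms $\|\partial_{\widehat x_j}\widehat u_j\|_{L^p(\widehat T)}$ with $j\ne i$, on the other hand, involve $\widehat u_j$ instead of $\widehat u_i$, so the Piola $1/h_j$ cancels the chain-rule $h_j$, and what survives is only the unmatched $h_i$ from the left, which is precisely the weight in front of $\|\partial_{\widetilde x_j}\widetilde u_j\|_{L^p(\widetilde T)}$ in (\ref{ineq4}). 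No new idea is required beyond this: the work of handling the non-orthogonal reference element has already been done inside Lemma~\ref{lemma31} and the accompanying stability lemma, so the scaling argument itself is completely routine once the diagonal map $B$ has been identified.
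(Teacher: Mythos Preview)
Your proposal is correct and follows essentially the same approach as the paper: identify the diagonal map $B=\mathrm{diag}(h_1,h_2,h_3)$ taking the skewed reference element $\widehat T$ onto $\widetilde T$, pull back via the Piola transform, apply the reference estimate (\ref{ineq3.5}), and change variables. Your bookkeeping of the weights---in particular the explanation of why the cross terms $\|\partial_{\widehat x_j}\widehat u_j\|$ with $j\ne i$ acquire the factor $h_i$ rather than $h_j$---is exactly the mechanism behind the asymmetric form of (\ref{ineq4}), and matches the paper's computation line for line.
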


\begin{proof} We proceed as in the proof of Proposition \ref{proprvp}.
Recall that now our reference element $\widehat T$ is the tetrahedron with vertices
at ${\bf 0}$, ${\bf e}_1+ {\bf e}_2$,
${\bf e}_2$ and ${\bf e}_3$. Therefore, the same linear map given
by $B$ in Proposition \ref{proprvp} maps $\hT$ in $\widetilde T$.
Then, if $\widehat\u\in W^{1,p}(\hT)^3$ is defined via the Piola transform
we have

\begin{equation}
\label{Piola3}
\widetilde\Pi_k\widetilde\u(\widetilde\x)=
\frac1{\det B}\,B\,\widehat\Pi_k\widehat\u(\widehat\x),
\end{equation}
and
\begin{equation}
\label{Piola4}
\widetilde\d\widetilde\u(\widetilde\x) =
\frac1{\det B}\widehat\d\widehat\u(\widehat\x).
\end{equation}
Using (\ref{ineq3.5}) and changing variables we have
\begin{eqnarray*}
\left\|\widetilde\Pi_{k,i}\widetilde\u\right\|_{L^p(\tT)}^p
&=&\frac{h_1h_2h_3}{h_2^ph_3^p}\left\|\widehat\Pi_{k,i}\widehat\u\right\|_{L^p(\hT)}^p\\
&\le& C \frac{h_1h_2h_3}{h_2^ph_3^p}\left(\|\hat
u_i\|^p_{\Lp(\hT)} + \sum_{j=1}^3 \left\| \frac{\partial \hat
u_i}{\partial \hat x_j}\right\|^p_{\Lp(\hT)} + \sum_{j=1\atop
j\neq i}^3  \left\|\frac{\partial \hat u_j}{\partial \hat x_j}
\right\|^p_{\Lp(\hT)}\right)\\
&=&C \left(\|\tilde u_i\|^p_{\Lp(\tilde T)} + \sum_{j=1}^3
h_j^p\left\| \frac{\partial \tilde u_i}{\partial \tilde
x_j}\right\|^p_{\Lp(\widetilde T)} + h_i^p \sum_{j=1\atop j\neq
i}^3   \left\|\frac{\partial \tilde u_j}{\partial \tilde x_j}
\right\|^p_{\Lp(\widetilde T)}\right)
\end{eqnarray*}
and therefore (\ref{ineq4}) is proved.\end{proof}

\begin{remark} For $i=1$ and $i=3$ a better result can be obtained. Indeed,
by the same arguments used in the previous proposition, but using now
(\ref{ineq1.5}) and (\ref{ineq3}), we can prove the following estimates,

$$
\left\|\widetilde\Pi_{k,i}\widetilde\u\right\|_{L^p(\tT)}
\le C\left(\|\tilde u_i\|_{\Lp(\widetilde T)} + \sum_{j=1}^3
h_j \left\| \frac{\partial \tilde u_i}{\partial \tilde x_j}
\right\|_{\Lp(\widetilde T)} +
h_2\|\widetilde\d\widetilde\u\|_{\Lp(\widetilde T)}\right).
$$
Anyway, this clearly depends on the particular orientation
of the element and so, it does not seem to be useful for
general tetrahedra.
\end{remark}

We can now prove the main result of this section.

\begin{proof}[Proof of Theorem \ref{mainmac}] From Theorem \ref{mac}
we know that there exists $\widetilde T\in \mathcal F_1\cup\mathcal F_2$
that can be mapped onto $T$ through an affine transformation
$\widetilde\x\mapsto M\widetilde\x+{\bf c}$, with $\|M\|,\|M^{-1}\|\le C$
for a constant $C$ depending only on $\bar\psi$. To simplify notation assume
that ${\bf c}={\bf 0}$.

If $\tT\in\mathcal F_1$ then, $T$ satisfies the regular vertex property
with a constant which depends only on $\bar\psi$ and so (\ref{mainmac1})
follows immediately from Theorem \ref{mainrvp}. Therefore, we
may assume that $\tT\in\mathcal F_2$ and has vertices at
${\bf 0}$, $h_1{\bf e}_1+ h_2 {\bf e}_2$,
$h_2{\bf e}_2$ and $h_3{\bf e}_3$, where $h_i>0$.

Given $\u\in\W1p(T)^3$ we use again the Piola transform and define
$\widetilde\u\in\W1p(\widetilde T)^3$ given by
\[
\u(\x)=\frac1{\det M}M\widetilde\u(\widetilde\x),\qquad
\x=M\widetilde\x\, .
\]

Then, using that
\[
\Pi_k\u(\x)=\frac1{\det M}M\,\widetilde\Pi_k\widetilde\u(\widetilde\x),
\]
changing variables and using (\ref{mainmac1}) in $\tT$
we obtain
$$
\|\Pi_k\u\|_{\Lp(T)}^p
\le C\|M\|^p\|M^{-1}\|^p \left( \|\u\|_{\Lp(T)}^p + h_T^p
\|M\|^p\|D\u\|_{\Lp(T)}^p\right)
$$
concluding the proof.\end{proof}

\setcounter{equation}{0}
\section{Sharpness of the results}
In view of the results of the previous sections, it is natural to ask whether
the estimate obtained under the maximum angle condition could be improved.
The goal of this section is to show that this is not possible.

Consider the element $\widetilde T\in\mathcal F_2$ with vertices at
${\bf 0}$, $h_1{\bf e}_1+ h_2 {\bf e}_2$,
$h_2{\bf e}_2$ and $h_3{\bf e}_3$ and with diameter $h_T$.
We are going to show that the inequality
\begin{equation}
\label{ineq9}
\|\widetilde\Pi_{k,2}\widetilde\u\|_{\Lp(\widetilde T)}\le
C\left(\|\widetilde \u\|_{\Lp(\widetilde T)} + \sum_{i,j=1}^3 h_j
\left\| \frac{\partial \tilde u_i}{\partial \tilde x_j}
\right\|_{\Lp(\widetilde T)} +
h_T\|\widetilde\d\widetilde\u\|_{\Lp(\widetilde T)}\right),
\end{equation}
with a constant $C$ independent of
$h_1, h_2$ and $h_3$,
does not hold for some
$\widetilde\u=(\tilde u_1,\tilde u_2,\tilde u_3)\in \W1p(\widetilde T)^3$.

Suppose that (\ref{ineq9}), with $C$ independent of $h_1, h_2$ and
$h_3$, holds true for all
$\widetilde\u\in\W1p(\widetilde T)^3$  . Let $\hT$ be the
reference element used in Section \ref{section4}, i.e.,
$\widehat T$ has vertices
at ${\bf 0}$, ${\bf e}_1+ {\bf e}_2$,
${\bf e}_2$ and ${\bf e}_3$. Then, with $\widehat\u\in \W1p(\hT)^3$
we associate $\widetilde \u \in \W1p(\widetilde T)^3$ defined via the
Piola transform with the linear transformation used in the proof of
Theorem \ref{mainmac}.

To simplify notation we drop the hat from now on and
write $\u$ instead of $\widehat\u$ and $x_i$ the variables
in $\widehat T$.

A simple computation shows that from inequality (\ref{ineq9})
we obtain
\begin{eqnarray*}
\|\Pi_{k,2}\u\|_{\Lp(\hT)} & \le &
C\frac1{h_2}\left(\sum_{i=1}^3 h_i \|\u_i\|_{\W1p(\hT)} +
h_T\|\d\u\|_{\Lp(\hT)}\right).
\end{eqnarray*}
Then, taking $h_1=h_3=h_2^2$ (with $h_2<1$), we would have
\begin{eqnarray*}
\|\Pi_{k,2}\u\|_{\Lp(\hT)} & \le & C\left(h_2
\|u_1\|_{\W1p(\hT)}+ \|u_2\|_{\W1p(\hT)} + h_2 \|u_3\|_{\W1p(\hT)}
+ \|\d\u\|_{\Lp(\hT)}\right),
\end{eqnarray*}
and letting $h_2\to 0$ we would arrive at
\begin{equation}\label{ineq8}
\|\Pi_{k,2}\u\|_{\Lp(\hT)}
\le C\left(\|u_2\|_{\W1p(\hT)} + \|\d\u\|_{\Lp(\hT)}\right).
\end{equation}
However, we are going to show that there exists $\u\in \W1p(\hT)^3$
for which inequality (\ref{ineq8}) is not valid.
In fact, in the next proposition we will give, for each
$k\ge 0$, a function $\u\in \W1p(\widehat T)^3$ such that
the right hand side of (\ref{ineq8}) vanishes while the left hand
side does not. We will use the notation of Section \ref{section4}
for the faces of $\widehat T$.

\begin{proposition} For $k\ge 0$,
the function $\u(x_1,x_2,x_3)=(x_1^{k+1},0,-(k+1)x_1^kx_3)$
verifies $\d\u=0, u_2=0$ and $\Pi_{k,2}\u\ne0$.
\end{proposition}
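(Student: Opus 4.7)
My plan is first to verify $\d\u=0$ and $u_2=0$ by direct computation, and then to prove $\Pi_{k,2}\u\neq 0$ by contradiction. The first two facts are immediate: $u_2\equiv 0$ by definition, and
\[
\d\u=\frac{\partial u_1}{\partial x_1}+\frac{\partial u_3}{\partial x_3}=(k+1)x_1^k-(k+1)x_1^k=0.
\]

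Now assume, for a contradiction, that $\Pi_{k,2}\u=0$. Since $\d\u=0$, the commutation property used in the proof of Lemma \ref{lemma31} yields $\d\Pi_k\u=0$, so $\Pi_k\u\in\mathcal P_k(\hT)^3$. On the face $\widehat F_1=\{x_1=0\}$ we have $u_1=x_1^{k+1}=0$; the face degree of freedom (\ref{RTcaras}) on $\widehat F_1$, tested with $p_k=\Pi_{k,1}\u|_{\widehat F_1}\in\mathcal P_k(\widehat F_1)$, therefore forces $\Pi_{k,1}\u|_{\widehat F_1}=0$. Hence we may write $\Pi_{k,1}\u=x_1\,P$ for some $P\in\mathcal P_{k-1}(\hT)$.

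Parametrize $\widehat F_2$ by $(s,t)\mapsto(s,s,t)$ on the triangle $\{s,t\ge 0,\ s+t\le 1\}$, and set $\widetilde P(s,t):=P(s,s,t)\in\mathcal P_{k-1}(s,t)$. Because $\n_2=(1,-1,0)/\sqrt 2$ and $u_2=\Pi_{k,2}\u=0$, the face degree of freedom on $\widehat F_2$ reduces to
\[
\int_{\widehat F_2}s\,\widetilde P(s,t)\,p_k(s,t)\,dA=\int_{\widehat F_2}s^{k+1}\,p_k(s,t)\,dA\qquad\forall\,p_k\in\mathcal P_k(s,t).
\]

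The decisive step is to test this identity against $p_k=\widetilde P-s^k\in\mathcal P_k(s,t)$. Expanding both sides and cancelling the cross term leads immediately to
\[
\int_{\widehat F_2} s\,(\widetilde P-s^k)^2\,dA=0.
\]
As the integrand is nonnegative and $s>0$ almost everywhere on $\widehat F_2$, we conclude $\widetilde P\equiv s^k$ as polynomials in $(s,t)$. However, $\widetilde P$ has total degree at most $k-1$ while $s^k$ has degree $k$, yielding the required contradiction; for $k=0$ the identity already fails at $\widetilde P=0$ with the test $p_0=-1$. I anticipate that spotting the test polynomial $\widetilde P-s^k$ is the main obstacle; every other step is a direct manipulation of the degrees of freedom defining $\Pi_k$.
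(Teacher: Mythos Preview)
Your proof is correct and follows essentially the same route as the paper's: both arguments use $\d\u=0$ to place $\Pi_k\u$ in $\mathcal P_k(\hT)^3$, deduce $\Pi_{k,1}\u=x_1\alpha$ with $\alpha\in\mathcal P_{k-1}$ from the $\widehat F_1$ degrees of freedom, and then test the $\widehat F_2$ face identity against the polynomial $\alpha|_{\widehat F_2}-x_1^k$ to force the degree contradiction $\alpha(x_1,x_1,x_3)=x_1^k$. Your explicit rewriting of that test as $\int_{\widehat F_2} s(\widetilde P-s^k)^2\,dA=0$ is a nice presentational touch, but it is exactly the same idea carried out with the same test function.
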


\begin{proof} We consider the case $k\ge1$ (the case $k=0$ follows analogously).
Since $\d\u=0$ we have $\Pi_{k,1}\u, \Pi_{k,3}\u \in\mathcal P_k(\widehat T)$.

Now, using that $u_1=0$ on $\widehat F_1$ and $u_3=0$ on $\widehat F_3$ it
follows from the definition of $\Pi_k\u$ that
$$
\int_{\widehat F_1}\Pi_{k,1}\u\,p_k =0
\qquad \forall p_k\in\mathcal P_k(\widehat F_1)
$$
and
$$
\int_{\widehat F_3}\Pi_{k,3}\u\,p_k =0
\qquad \forall p_k\in\mathcal P_k(\widehat F_3).
$$
Then $\Pi_{k,1}\u=x_1\alpha$ and $\Pi_{k,3}\u=x_3\beta$
with $\alpha,\beta\in\mathcal P_{k-1}(\widehat T)$.
Also from the definition of $\Pi_k\u$ we have
\[
\int_{\widehat F_2} (\Pi_{k,1}\u -
\Pi_{k,2}\u)\,p_k = \int_{\widehat F_2}
(u_1-u_2)\,p_k\qquad \forall p_k\in\mathcal P_k(\widehat F_2),
\]
and then, if $\Pi_{k,2}\u=0$, we would obtain
\[
\int_{\widehat F_2}x_1(\alpha-x_1^k)\,p_k=0 \qquad\forall
p_k\in\mathcal P_k(\widehat F_2).
\]
But taking $p_k=\alpha(x_1,x_1,x_3)-x_1^k$ in the last equation, we
arrive at $\alpha(x_1,x_1,x_3)=x_1^k$, but this contradicts the
fact that $\alpha\in\mathcal P_{k-1}(\widehat T)$. Then we have
$\Pi_{k,2}\u\ne0$.\end{proof}

\setcounter{equation}{0}
\section{Error estimates for RT interpolation}

We end the paper giving optimal error estimates for Raviart-Thomas
interpolation of any order. These estimates are derived from the stability
results obtained in the previous sections combined with
polynomial approximation results.

Let us recall some well known properties of the averaged Taylor polynomial.
For a convex domain $D$ and any non-negative integer
$m$, given $f\in W^{p,m+1}(D)$ the averaged Taylor polynomial
is given by
$$
Q_mf(\x)=\frac1{|D|}\int_D T_mf(\y,\x)\,d\y \, ,
$$
where
$$
T_mf(\y,\x)=\sum_{|\alpha|\le m} D^\alpha
f(\y)\frac{(\x-\y)^\alpha}{\alpha!}\, .
$$
Then, there exists a constant $C$, depending only on $m$
and $D$ (see for example \cite{BS,D3}), such that
\begin{equation}
\label{ineqinterp}
\|D^\beta (f-Q_mf)\|_{\Lp(D)}
\le C \sum_{i_1+i_2+i_3=m+1}
\left\|\frac{\partial^{m+1}f}{\partial x_1^{i_1}\partial x_2^{i_2}
\partial x_3^{i_3}} \right\|_{\Lp(D)}
\end{equation}
whenever $0\le |\beta|\le m+1$.

As a consequence of these results we have the following approximation
result for elements satisfying the regular vertex property. Given a
function $f$, $D^m f$ denotes the sum of the absolute values of all the
derivatives of order $m$ of $f$.

\begin{lemma}
\label{averaged taylor}
Let $T$ be a tetrahedron satisfying \textsl{RVP}$(\bar c)$ such that
${\bf p}_0$ is the regular vertex, $\l_i, i=1,2,3$ are unitary
vectors with the directions of the edges sharing ${\bf p}_0$,
$h_i, i=1,2,3$, the lengths of these edges, and $h_T$ the diameter of $T$.
Then, given $\u\in W^{m+1,p}(T)^3$, $m\ge0$, there exists $\q\in{\mathcal P}_m(T)^3$
such that,
\begin{equation}
\label{at1}
\left\|\frac{\partial(\u - \q)}{\partial \l_1}\right\|_{\Lp(T)}
\le C \sum_{i_1+i_2+i_3=m} h_1^{i_1} h_2^{i_2} h_3^{i_3}
\left\|\frac{\partial^{m+1}\u}{\partial\l_1^{i_1+1}\partial\l_2^{i_2}
\partial\l_3^{i_3}} \right\|_{\Lp(T)}
\end{equation}
and analogously for $\frac{\partial(\u - \q)}{\partial \l_j}$ with $j=2,3$.
Also,
\begin{equation}
\label{at2}
\|\d(\u-\q)\|_{\Lp(T)}\le C h_T^m \|D^m\d\u\|_{\Lp(T)}
\end{equation}
where the constant $C$ depends only on $m$ and $\bar c$.
\end{lemma}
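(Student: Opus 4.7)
The plan is to build $\q$ by averaged Taylor approximation on the unit reference tetrahedron, transported back by the two affine maps furnished by Theorem \ref{sobrervp}. Let $M$ be the matrix whose columns are $\l_1,\l_2,\l_3$ (so $\|M\|,\|M^{-1}\|\le C(\bar c)$), let $B=\mathrm{diag}(h_1,h_2,h_3)$, and set $\Phi(\hat\x)=MB\hat\x+{\bf p}_0$, which maps $\hT$ onto $T$. Pull $\u$ back componentwise (not via Piola) as $\hat u_i(\hat\x)=u_i(\Phi(\hat\x))$, define $\hat q_i:=Q_m\hat u_i$ on $\hT$ so that the constants in (\ref{ineqinterp}) depend only on $m$, and set $q_i(\x):=\hat q_i(\Phi^{-1}(\x))$. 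Since $\Phi$ is affine, $\q\in\mathcal P_m(T)^3$.

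The key algebraic tool is the commutation $D^\beta Q_mf=Q_{m-|\beta|}(D^\beta f)$, which is immediate from the definition of $T_mf$ by differentiating under the integral. For (\ref{at1}) with $\beta={\bf e}_j$, applying the standard Bramble--Hilbert bound on $\hT$ to $\partial_{\hat x_j}\hat u_i$ yields
$$
\|\partial_{\hat x_j}(\hat u_i-\hat q_i)\|_{\Lp(\hT)}\le C\sum_{i_1+i_2+i_3=m}\|\partial_{\hat x_1}^{i_1}\partial_{\hat x_2}^{i_2}\partial_{\hat x_3}^{i_3}\partial_{\hat x_j}\hat u_i\|_{\Lp(\hT)}.
$$
Using the chain-rule identity $\partial_{\hat x_k}\hat u_i=h_k\,(\partial u_i/\partial\l_k)\circ\Phi$ (because the $k$-th column of $MB$ is $h_k\l_k$) together with the change of variables $|\det(MB)|\asymp h_1h_2h_3$, one $h_j$ factor cancels on both sides and what remains is exactly the weighted sum in (\ref{at1}), modulo constants involving $\|M\|,\|M^{-1}\|$ bounded in terms of $\bar c$.

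For (\ref{at2}) the subtlety is that componentwise pullback does not intertwine the divergence. Instead the chain rule gives
$$
\widehat{\d\u}(\hat\x):=(\d\u)(\Phi(\hat\x))=\sum_{i,j}h_j^{-1}(M^{-1})_{ji}\,\partial_{\hat x_j}\hat u_i(\hat\x),
$$
and the analogous identity with $\hat q_i$ in place of $\hat u_i$ defines $\widehat{\d\q}$. Since $\partial_{\hat x_j}\hat q_i=Q_{m-1}(\partial_{\hat x_j}\hat u_i)$ by the commutation, linearity of $Q_{m-1}$ collapses $\widehat{\d\q}$ to $Q_{m-1}(\widehat{\d\u})$. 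The standard Bramble--Hilbert estimate on $\hT$ applied to $\widehat{\d\u}\in W^{m,p}(\hT)$ then controls $\|\widehat{\d\u}-Q_{m-1}\widehat{\d\u}\|_{\Lp(\hT)}$ by $\Lp$-norms of $m$-th derivatives of $\widehat{\d\u}$; scaling each $\partial_{\hat x_k}$ back to $h_k\,\partial/\partial\l_k$ produces weights $h_1^{\gamma_1}h_2^{\gamma_2}h_3^{\gamma_3}\le h_T^m$ for $|\gamma|=m$, the Jacobians cancel, and the directional derivatives are dominated pointwise by $|D^m\d\u|$ up to a constant depending only on $\|M\|$, yielding (\ref{at2}).

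The principal obstacle is the second estimate: one must resist the temptation to use the Piola transform (which would sacrifice the componentwise anisotropic structure exploited in (\ref{at1})) and instead identify $\widehat{\d\u}$ explicitly as the linear combination of the $\partial_{\hat x_j}\hat u_i$ displayed above, so that linearity of $Q_{m-1}$ together with the commutation identity does the work. A secondary point is that one must invoke the sharper form of Bramble--Hilbert with one derivative moved inside the argument of $Q_m$, which is not quite how (\ref{ineqinterp}) is stated but follows directly from the $T_mf$ representation.
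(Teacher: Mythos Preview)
Your argument is correct, but it differs from the paper's in one structural choice. The paper factors the affine map as $\hT\xrightarrow{B}\tT\xrightarrow{M}T$ and applies the \emph{Piola} transform with $M$ (the ``nice'' matrix, $\|M\|,\|M^{-1}\|\le C(\bar c)$), taking the averaged Taylor polynomial componentwise on $\tT$ and pushing forward by Piola to define $\q$. Because Piola intertwines divergences, the paper gets $\widetilde\d\btQ_m\tu=\tQ_{m-1}\widetilde\d\tu$ for free and then obtains (\ref{at2}) directly from (\ref{ineqinterp}) and scaling, without ever writing $\d\u$ as an explicit linear combination of the $\partial_{\hat x_j}\hat u_i$. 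Your route---componentwise pullback by the full map $MB$, Taylor on $\hT$, and the explicit formula $\widehat{\d\u}=\sum_{i,j}h_j^{-1}(M^{-1})_{ji}\partial_{\hat x_j}\hat u_i$ to recover the same commutation $\widehat{\d\q}=Q_{m-1}\widehat{\d\u}$---is a legitimate alternative and arrives at the same bounds. What you gain is a single change of variables and a very transparent derivation of (\ref{at1}); what the paper gains is a one-line proof of (\ref{at2}).

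Your closing remark that one ``must resist the temptation to use the Piola transform'' is a bit misleading: Piola with the full anisotropic map $MB$ would indeed mix components with coefficients of size $h_j^{-1}$ and spoil (\ref{at1}), but Piola applied only with $M$ (as in the paper) keeps the anisotropy entirely in the diagonal $B$-step and causes no loss. Finally, your observation that (\ref{ineqinterp}) as stated does not literally give the sharper estimate with one $\partial_{\hat x_j}$ moved inside, and that the commutation $D^\beta Q_mf=Q_{m-|\beta|}D^\beta f$ is what actually delivers it, is well taken; the paper uses this implicitly in its displayed estimate on $\tT$.
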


\begin{proof} To simplify notation we assume again that ${\bf p}_0={\bf 0}$.
From Theorem \ref{sobrervp} we know that there exists a matrix $M$
such that its associated linear transformation maps $\tT$ onto
$T$, where $\tT$ is the element with vertices at  ${\bf 0}$,
$h_1{\bf e}_1$, $h_2{\bf e}_2$ and $h_3{\bf e}_3$. Moreover, the
norms of $M$ and of its inverse matrix are bounded by a constant
which depends only on $\bar c$.

Now, as in the proof of Theorem \ref{mainrvp}, we define $\widetilde\u\in W^{m+1,p}(\tT)^3$
via the Piola transform and
$$
\btQ_m\tu=(\tQ_m \tilde u_1,\tQ_m \tilde u_2,\tQ_m \tilde
u_3)\in\mathcal P_m(\tT)^3,
$$
where $\tQ_m\tilde u_j$ denotes the averaged Taylor polynomial of
$\tilde u_j$.

Using the estimate (\ref{ineqinterp}) on the reference element $\hT$ which
has vertices at ${\bf 0}$, ${\bf e}_1$, ${\bf e}_2$ and ${\bf e}_3$, and a standard
scaling argument we obtain
$$
\left\|\frac{\partial(\tu-\btQ_m\tu)}{\partial\tilde
x_1}\right\|_{\Lp(\tT)} \le C \sum_{i_1+i_2+i_3=m} h_1^{i_1}
h_2^{i_2} h_3^{i_3} \left\|\frac{\partial^{m+1}\tu}{\partial
\tilde x_1^{i_1+1}\partial \tilde x_2^{i_2}
\partial \tilde x_3^{i_3}} \right\|_{\Lp(\tT)}.
$$
Then, defining $\q\in\mathcal P_m(T)^3$ via the Piola transform,
that is,
$$
\q(\x)=\frac1{\det M}M\btQ_m\tu(\widetilde\x),
\qquad\x=M\widetilde\x\, ,
$$
(\ref{at1}) follows by changing variables as in the proof
of Theorem \ref{mainrvp}.

On the other hand, since
$$
\widetilde\d\btQ_m\tu= \tQ_{m-1}\widetilde\d\tu ,
$$
using again (\ref{ineqinterp}) in $\hT$ and a scaling argument we obtain,
$$
\|\widetilde\d(\tu-\btQ_m\tu)\|_{\Lp(\tT)}
\le C h_{\tT}^m \|\widetilde D^m\widetilde\d\u\|_{\Lp(\tT)}
$$
and therefore, (\ref{at2}) follows by using the properties
of the Piola transform stated in (\ref{propiedadpiola}).
\end{proof}

We can now state and prove optimal error estimates for
elements satisfying the regular vertex property. Our theorem
generalizes the results proved in \cite{AD1}, where the same error estimate
was proved in the case $k=0$, as well as those proved in \cite{DL3},
where the estimate was proved for any $k\ge 0$ but only in the case $m=k$.

\begin{theorem}
Let $k\ge0$ and $T$ be a tetrahedron satisfying \textsl{RVP}$(\bar c)$. If
${\bf p}_0$ is the regular vertex, $\l_i, i=1,2,3$ are unitary
vectors with the directions of the edges sharing ${\bf p}_0$,
$h_i, i=1,2,3$, the lengths of these edges, and $h_T$ the diameter of $T$ then,
there exists a constant $C$ depending only on $k$ and $\bar c$ such that,
for $0\le m\le k$, $1\le p\le \infty$, and $\u\in W^{m+1,p}(T)^3$,
\begin{eqnarray*}
\lefteqn{\|\u - \Pi_k\u\|_{\Lp(T)}}\\&\le&
C\left\{\sum_{i_1+i_2+i_3=m+1} h_1^{i_1} h_2^{i_2} h_3^{i_3}
\left\|\frac{\partial^{m+1}\u}{\partial\l_1^{i_1}\partial\l_2^{i_2}
\partial\l_3^{i_3}} \right\|_{\Lp(T)}
 + h_T^{m+1} \|D^m\d\u\|_{\Lp(T)}\right\}
\end{eqnarray*}
\end{theorem}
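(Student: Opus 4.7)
The plan is a classical ``stability $+$ polynomial approximation'' argument, exploiting that $\mathcal P_m(T)^3 \subset \mathcal{RT}_k$ for every $m\le k$. First I would pick a polynomial approximation $\q\in\mathcal P_m(T)^3$ to $\u$; since $\Pi_k\q=\q$, the triangle inequality gives
\[
\|\u-\Pi_k\u\|_{\Lp(T)}\le\|\u-\q\|_{\Lp(T)}+\|\Pi_k(\u-\q)\|_{\Lp(T)}.
\]
The second term is handled by the stability estimate of Theorem \ref{mainrvp} applied to $\u-\q$:
\[
\|\Pi_k(\u-\q)\|_{\Lp(T)}\le C\left(\|\u-\q\|_{\Lp(T)}+\sum_{i,j}h_j\left\|\frac{\partial(u_i-q_i)}{\partial\l_j}\right\|_{\Lp(T)}+h_T\|\d(\u-\q)\|_{\Lp(T)}\right).
\]
So everything reduces to controlling $\|\u-\q\|_{\Lp(T)}$, the directional derivatives $h_j\|\partial(\u-\q)/\partial\l_j\|_{\Lp(T)}$, and $h_T\|\d(\u-\q)\|_{\Lp(T)}$.

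The natural choice of $\q$ is the vector averaged Taylor polynomial transported through the Piola map, exactly as constructed in Lemma \ref{averaged taylor}. That lemma already provides the directional-derivative estimate \eqref{at1} and the divergence estimate \eqref{at2}, which give directly
\[
\sum_j h_j\left\|\frac{\partial(\u-\q)}{\partial\l_j}\right\|_{\Lp(T)}\le C\sum_{i_1+i_2+i_3=m+1}h_1^{i_1}h_2^{i_2}h_3^{i_3}\left\|\frac{\partial^{m+1}\u}{\partial\l_1^{i_1}\partial\l_2^{i_2}\partial\l_3^{i_3}}\right\|_{\Lp(T)},
\]
and
\[
h_T\|\d(\u-\q)\|_{\Lp(T)}\le C h_T^{m+1}\|D^m\d\u\|_{\Lp(T)},
\]
which are precisely the two types of terms appearing in the statement.

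What is still missing is the bound on $\|\u-\q\|_{\Lp(T)}$ itself (the $|\beta|=0$ case). I would obtain it exactly as \eqref{at1} was obtained in the proof of Lemma \ref{averaged taylor}: apply the averaged Taylor estimate \eqref{ineqinterp} with $\beta=0$ on the reference tetrahedron $\hT$ to get
\[
\|\tu-\btQ_m\tu\|_{\Lp(\tT)}\le C\sum_{i_1+i_2+i_3=m+1}h_1^{i_1}h_2^{i_2}h_3^{i_3}\left\|\frac{\partial^{m+1}\tu}{\partial\tilde x_1^{i_1}\partial\tilde x_2^{i_2}\partial\tilde x_3^{i_3}}\right\|_{\Lp(\tT)},
\]
then push back to $T$ through the Piola transform and the affine change of variables $\x=M\widetilde\x$, using that $\|M\|,\|M^{-1}\|\le C(\bar c)$, just as in the derivation of \eqref{at1}. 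Combining this with the two bounds above and the stability inequality proves the theorem.

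I do not expect any real obstacle: the only slightly delicate point is bookkeeping when translating the reference-element estimate back to $T$, because the Piola map mixes components by a factor $\frac{1}{\det M}M$ while the diagonal scaling by $\operatorname{diag}(h_1,h_2,h_3)$ produces exactly the anisotropic factors $h_1^{i_1}h_2^{i_2}h_3^{i_3}$; since $\|M\|,\|M^{-1}\|\le C(\bar c)$ this mixing only costs a harmless constant, and the same calculation done in Proposition \ref{proprvp} and in Lemma \ref{averaged taylor} can be reused verbatim.
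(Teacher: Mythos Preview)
Your proposal is correct and matches the paper's proof essentially verbatim: write $\u-\Pi_k\u=(\u-\q)-\Pi_k(\u-\q)$, apply the stability estimate of Theorem~\ref{mainrvp} to $\u-\q$, and choose $\q$ as in Lemma~\ref{averaged taylor}. You even spell out the one point the paper leaves implicit, namely that the zeroth-order bound $\|\u-\q\|_{\Lp(T)}$ follows from the same averaged-Taylor argument used for~\eqref{at1}.
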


\begin{proof} Since $m\le k$, for any $\q\in {\mathcal P}_m(T)^3$ we have
$$
\u - \Pi_k\u=\u-\q - \Pi_k(\u-\q)
$$
and therefore, applying Theorem \ref{mainrvp}, we obtain
\begin{eqnarray*}
\lefteqn{\left\|\u-\Pi_k\u\right\|_{\Lp(T)}} \\
&\le& C\left\{ \|\u-\q\|_{\Lp(T)} +
\sum_{i,j} h_j\left\|\frac{\partial (u_i-q_i)}{\partial
\ell_j}\right\|_{\Lp(T)} + h_T\|\d(\u-\q)\|_{\Lp(T)}\right\}.
\end{eqnarray*}
Then, taking $\q\in {\mathcal P}_m(T)^3$ satisfying (\ref{at1}) and (\ref{at2})
we conclude the proof. \end{proof}

Also optimal error estimates under the maximum angle condition
can be proved. We state the results in the following theorem.

\begin{theorem}
Let $k\ge0$ and $T$ be a tetrahedron with diameter $h_T$
satisfying \textsl{MAC}$(\bar\psi)$. There exists a constant $C$
depending only on $k$ and $\bar\psi$ such that,
for $0\le m\le k$, $1\le p\le \infty$, and $\u\in W^{m+1,p}(T)^3$,
$$
\|\u-\Pi_k\u\|_{\Lp(T)}\le C h_T^{m+1}\|D^{m+1}\u\|_{\Lp(T)}.
$$
\end{theorem}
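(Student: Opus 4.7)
My plan is to mirror the structure of the RVP-based proof immediately preceding this theorem: combine the MAC stability estimate of Theorem \ref{mainmac} with a Bramble--Hilbert-type polynomial approximation result adapted to the MAC reference families.

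First, since $m\le k$ we have $\mathcal P_m(T)^3\subset \mathcal{RT}_k$, and hence for any $\q\in\mathcal P_m(T)^3$ the Raviart--Thomas interpolant satisfies $\Pi_k\q=\q$. Consequently
\[
\u-\Pi_k\u=(\u-\q)-\Pi_k(\u-\q),
\]
and applying Theorem \ref{mainmac} to $\u-\q$ yields
\[
\|\u-\Pi_k\u\|_{\Lp(T)}\le C\bigl(\|\u-\q\|_{\Lp(T)}+h_T\|\nabla(\u-\q)\|_{\Lp(T)}\bigr).
\]
It therefore suffices to exhibit $\q\in\mathcal P_m(T)^3$ for which
\[
\|\u-\q\|_{\Lp(T)}+h_T\|\nabla(\u-\q)\|_{\Lp(T)}\le Ch_T^{m+1}\|D^{m+1}\u\|_{\Lp(T)},
\]
with $C$ depending only on $m$ and $\bar\psi$.

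To construct such a $\q$ I follow the two-step reduction used in Lemma \ref{averaged taylor}, but now accommodating both MAC reference families. By Theorem \ref{mac} write $T=F(\widetilde T)$, where $F(\widetilde\x)=M\widetilde\x+{\bf c}$, $\widetilde T\in\mathcal F_1\cup\mathcal F_2$, and $\|M\|,\|M^{-1}\|\le C(\bar\psi)$. In turn, $\widetilde T$ is the image of the fixed-shape reference tetrahedron $\hT$ (that of Section 3 when $\widetilde T\in\mathcal F_1$ and that of Section 4 when $\widetilde T\in\mathcal F_2$) under the diagonal map $\widetilde\x=B\hat\x$ with $B=\mathrm{diag}(h_1,h_2,h_3)$. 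Pull $\u$ back to $\hT$ through the composition $F\circ B$ via the Piola transform; apply the componentwise averaged Taylor polynomial $\btQ_m$ on $\hT$, where the chunkiness is a fixed constant so (\ref{ineqinterp}) holds with $C=C(m)$; and push the resulting polynomial forward through the same Piola chain to obtain $\q\in\mathcal P_m(T)^3$. A routine change-of-variables bookkeeping using $\|M\|,\|M^{-1}\|\le C(\bar\psi)$ and the trivial bounds $h_i\le h_T$ then delivers the required isotropic bounds on $\|\u-\q\|_{\Lp(T)}$ and $\|\nabla(\u-\q)\|_{\Lp(T)}$, and substituting into the stability inequality above finishes the proof.

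The main technical obstacle is keeping track of the constants produced by the two-step Piola transformation so that the correct power $h_T^{m+1}$ emerges and no hidden anisotropic factor $h_i^{-1}$ survives; this is essentially the same calculation carried out in Propositions \ref{proprvp} and \ref{propmac}, repeated in parallel for the two reference families. The decisive freedom compared to the RVP setting is that we are allowed—and, by the sharpness result of Section 5, obliged—to replace every individual edge length $h_i$ by the diameter $h_T$ in the final estimate.
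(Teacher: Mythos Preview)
Your proposal is correct and matches the approach the paper indicates: the paper itself omits the proof, merely noting that it is analogous to the preceding RVP error-estimate argument with Theorem \ref{mainmac} substituted for Theorem \ref{mainrvp}. Your write-up fills in exactly those details---the identity $\u-\Pi_k\u=(\u-\q)-\Pi_k(\u-\q)$, the MAC stability bound, and the Bramble--Hilbert construction of $\q$ via the two-step map through the reference tetrahedra of $\mathcal F_1$ and $\mathcal F_2$.
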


The proof is analogous to that of the previous theorem, using now
the stability estimates obtained in Theorem \ref{mainmac}, and so
we omit the details.

\end{document}